\newtheorem{thm}{Theorem}[section]
\newtheorem{cor}[thm]{Corollary}
\newtheorem{lem}[thm]{Lemma}
\newtheorem{prop}[thm]{Proposition}
\newtheorem{theorem}{Theorem}
\theoremstyle{definition}
\newtheorem{definition}[thm]{Definition}
\newtheorem{rem}[thm]{Remark}
\newtheorem*{ack}{Acknowledgements}
\numberwithin{equation}{section}
\DeclareMathOperator{\Susp}{Susp}
\newcommand{\RP}{\mathbb{R}P}
\newcommand{\sphere}{\mathrm{\mathbb{S}}}
\newcommand{\Real}{\mathrm{\mathbb{R}}}
\newcommand{\SO}{\mathrm{SO}}
\begin{document}



\title[Three-dimensional Alexandrov spaces]{Three-dimensional Alexandrov spaces with local isometric circle actions  
}

\author[F.~Galaz-Garc\'ia]{Fernando Galaz-Garc\'ia$^*$}

\author[J.~N\'u\~nez-Zimbr\'on]{Jes\'us N\'u\~nez-Zimbr\'on $^{\ast\ast}$}

\thanks{$^*$ Partially supported by DGAPA-UNAM grant PAPIIT IN-113516.}
\thanks{$^{\ast\ast}$ Partially supported by PAEP-UNAM, DGAPA-UNAM grant PAPIIT IN-113516 and a UC MEXUS-CONACYT postdoctoral grant under the project ``Alexandrov geometry''.}

\address[F.~Galaz-Garc\'ia]{Institut f\"ur Algebra und Geometrie, Karlsruher Institut f\"ur Technologie (KIT), Karlsruhe, Germany}
\email{galazgarcia@kit.edu}

\address[J.~N\'u\~nez-Zimbr\'on]{Departament of Mathematics, University of California, Santa Barbara, USA}
\email{zimbron@ucsb.edu}

\date{\today}


\subjclass[2010]{53C23, 57S15, 57S25}
\keywords{$3$-manifold, circle action, Alexandrov space, collapse}


\begin{abstract}
We obtain a topological and  equivariant classification of closed, connected three-dimensional Alexandrov spaces admitting a local isometric circle action. 
We show, in particular, that such spaces are homeomorphic to connected sums of some closed $3$-manifold with a local circle action and finitely many copies of the suspension of the real projective plane.
\end{abstract}
\maketitle




\section{Introduction}

Alexandrov spaces are metric generalizations of Riemannian manifolds with (sectional) curvature bounded  below; they were first studied in \cite{BGP} and have provided a natural setting to study questions of global Riemannian geometry. Although the role of these spaces in Riemannian geometry has been one of the main motivations for their study, they are also objects of intrinsic interest.

As for Riemannian manifolds, one may investigate Alexandrov spaces via their symmetries. Since the isometry group of a compact Alexandrov space is a compact Lie group (see \cite{FY}),  the symmetry point of view naturally leads to  the study of  isometric Lie group actions on Alexandrov spaces. In this context, the  combined work of Berestovski\v{\i} \cite{Be}, Galaz-Garc\'ia and Searle \cite{GS} and N\'u\~nez-Zimbr\'on \cite{NZ} yields equivariant and topological classifications of closed Alexandrov spaces of dimension at most three with an effective, isometric action of a compact, connected Lie group. Here, as for manifolds, an Alexandrov space is said to be \emph{closed} if it is compact and has no boundary.

In the present article we focus our attention on closed three-dimensional Alexandrov spaces with \emph{local circle actions}. These actions generalize isometric circle actions and are decompositions of the space into disjoint, simple, closed curves (possibly single points) such that each one of these curves has a small tubular neighborhood $V$ equipped with a circle action. The orbits of this circle action are required to be the curves of the decomposition contained in $V$. 

In the context of topological manifolds, Orlik, Raymond \cite{OR2}, and Fintushel \cite{F} obtained a classification of effective, local circle actions on closed topological $3$-manifolds, generalizing the classification of closed topological $3$-manifolds with effective circle actions in \cite{R} and \cite{OR}:


\begin{theorem}[Orlik and Raymond \cite{OR2}, Fintushel \cite{F}]
\label{THM:ORF}
A closed, connected topological $3$-manifold $M$ with an  effective local $S^1$-action is determined up to equivariant equivalence by a set of fiber invariants 
\[
\{b; \varepsilon, g, (f,k_1), (t,k_2); \{ (\alpha_i, \beta_i) \}_{i=1}^n \}.
\]
\end{theorem}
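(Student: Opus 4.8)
The plan is to follow the slice-theoretic strategy of Orlik--Raymond \cite{OR2} and Fintushel \cite{F}: reduce the problem to an analysis of the orbit space $M^{\ast}=M/S^1$ together with gluing data concentrated along the singular orbits, and then check that this data is recorded faithfully by the listed invariants. I would begin by pinning down the local structure. By hypothesis each orbit $O$ lies in an invariant tube $V$ carrying an honest $S^1$-action with $O$ as an orbit, so the slice theorem for $S^1$-actions on $3$-manifolds applies inside $V$; combined with the monodromy around $O$ --- which, because the action is only local, may conjugate the $S^1$-action by inversion --- this forces $V$ to be one of finitely many standard models: fibered solid tori with principal or exceptional $E(\alpha,\beta)$ core, tubes whose fixed-point set is a curve, and the twisted special exceptional solid tori and solid Klein bottles that do not arise for genuine $S^1$-actions. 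Consequently $M^{\ast}$ is a compact surface, possibly with boundary, with orientability type $\varepsilon\in\{o,n\}$ and genus $g$; the exceptional orbits project to finitely many interior points decorated with normalized Seifert pairs $(\alpha_i,\beta_i)$, $0<\beta_i<\alpha_i$; and the fixed-point curves and special exceptional orbits project to the $1$-dimensional part of the singular set, whose cardinality and twisting type are recorded by $(f,k_1)$ and $(t,k_2)$.

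Next I would excise equivariant open tubes around all singular orbits. What remains is a principal $S^1$-bundle $M_0\to M_0^{\ast}$ over a compact surface with boundary; such a surface deformation retracts onto a $1$-complex, so $M_0$ is equivariantly a product and all of the topology of $M$ is concentrated in how the standard models are reglued to $M_0$ along boundary tori and Klein bottles. Each regluing is by a mapping class of the torus, but the freedom to modify a section of $M_0$ cuts the admissible choices down; carrying out this bookkeeping recovers the exceptional pairs $(\alpha_i,\beta_i)$ as genuine local invariants and leaves a single residual integer $b$, the Euler-type obstruction to a global section. The theorem then splits into two assertions: (realization) every tuple satisfying the evident compatibility constraints is obtained by this gluing construction, and (determination) if two local $S^1$-manifolds share the same invariants, then an identification of their orbit spaces respecting the singular strata can be promoted to an $S^1$-equivariant homeomorphism.

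The determination statement is the substantive part, and I would build the equivariant homeomorphism stratum by stratum: over the $0$- and $1$-dimensional singular strata using the uniqueness of each standard tube (here the normalizations of the $(\alpha_i,\beta_i)$ and the combinatorics of the $SE$- and fixed strata enter), and over the complement using the uniqueness rel boundary of principal $S^1$-bundles over a surface with boundary together with the equality of the obstructions $b$. The main obstacle, as in \cite{OR2} and \cite{F}, will be the orientation bookkeeping: a nonorientable orbit space or a special exceptional orbit reverses the co-orientation of the fibers, which flips the sign conventions for $b$ and for the $\beta_i$, so one must fix compatible local fiber-orientations along a spanning family of loops in $M^{\ast}$ and verify that the normalized invariants --- hence the extension of the homeomorphism across the last cells --- do not depend on these auxiliary choices. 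Granting this, the assembled map intertwines the two local $S^1$-actions and is a homeomorphism, which is exactly the statement of Theorem \ref{THM:ORF}.
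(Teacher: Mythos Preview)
The paper does not give its own proof of Theorem~\ref{THM:ORF}; it is quoted as a known result due to Orlik--Raymond \cite{OR2} and Fintushel \cite{F} and then used as input for Theorem~\ref{THM:INVARIANTS}. The supporting material in Section~\ref{S:Orbit_types} (fiber types, block decompositions, and the bundle classification of Theorem~\ref{THM:CLASS_BUNDLES}) is recalled from those sources rather than reproved, so there is no in-paper argument to compare your outline against. Your sketch is a fair summary of the strategy in the original references.

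One correction worth noting: you describe $\varepsilon$ as simply the orientability type $\{o,n\}$ of $M^{\ast}$, but in fact $\varepsilon$ takes values in $\{o_1,o_2,n_1,n_2,n_3,n_4\}$ (collapsing to $\{o,n\}$ only when $k=k_1+k_2>0$); as stated in Theorem~\ref{THM:CLASS_BUNDLES}, it also records how the generators $a_j,b_j$ or $v_j$ of $\pi_1(M^{\ast})$ act on fiber orientations. This is exactly the orientation bookkeeping you flag as the main obstacle, but it enters already at the level of the invariant list, not only in the determination step.
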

We recall the definition of these invariants in Section~\ref{S:Orbit_types}.
The topological classification of the manifolds in Theorem~\ref{THM:ORF} is given in \cite[Theorem in p.~143]{OR2} and \cite[Section~3]{F}.

We consider effective and isometric local circle actions on closed Alexandrov $3$-spaces, i.e effective local circle actions such that the $S^1$-actions on the tubular neighborhoods of the curves of the decomposition are isometric with respect to the restricted metric. This class of spaces contains the class of closed Alexandrov $3$-spaces admitting an isometric circle action as well as the class of closed Seifert $3$-manifolds.

Recall that the space of directions $\Sigma_xX$ of a point $x$ in an Alexandrov $3$-space $X$ without boundary must be homeomorphic either to a $2$-sphere or to a real projective plane $\mathbb{R}P^2$ (see Section~\ref{S:PRELIM}). We say that $x\in X$ is \emph{topologically singular} if $\Sigma_xX$ is homeomorphic to $\mathbb{R}P^2$.
Let $\Susp(\mathbb{R}P^2)$ denote the suspension of the real projective plane. Our main result, which generalizes \cite[Theorem 1.2]{NZ}, is the following:


\begin{theorem}
\label{THM:INVARIANTS}
Let $X$ be a closed, connected Alexandrov $3$-space admitting an isometric local $S^1$-action. Assume that $X$ has $2r\geq 0$ topologically singular points. Then the following hold:
\begin{itemize}
\item[1.] The set of isometric local circle actions	 (up to equivariant equivalence)  is in one-to-one correspondence with the set of unordered tuples
\[
\left\{b; \varepsilon, g, (f,k_1), (t,k_2), (s,k_3); \{ (\alpha_i, \beta_i) \}_{i=1}^n; (r_1,r_2, \ldots, r_{s-k_{3}}); (q_1, q_2, \ldots, q_{k_3})\right\}.
\]
where the permissible values for $b$, $\varepsilon$, $g$, $(f,k_1)$, $(t,k_2)$ and $(\alpha_i,\beta_i)$ are given by Theorem \ref{THM:ORF}, and 
$(r_1, r_2, \ldots, r_{s-k_{3}})$ and $(q_1, q_2, \ldots, q_{k_3})$  are unordered $(s-k_3)$- and $k_3$-tuples of non-negative even integers $r_i$, $q_j$, respectively, such that $r_1+\ldots + r_{s-k_{3}} + q_1 +\ldots q_{k_3}=2r$.\\ 

\item[2.] The space $X$ is equivariantly equivalent to 
\[
M\#\underbrace{\Susp(\Real P^2)\# \cdots \# \Susp(\Real P^2)}_{r \text{ summands}},
\]
where $M$ is the closed $3$-manifold determined by the set of invariants 
\[
\left\{ b; \varepsilon, g, (f+s,k_1+k_3), (t,k_2); \{ (\alpha_i, \beta_i) \}_{i=1}^n \right\}
\]
in Theorem \ref{THM:ORF}.
\end{itemize}
\end{theorem}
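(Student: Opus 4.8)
The plan is to follow the strategy of \cite{NZ}, replacing the Seifert/Orlik--Raymond invariants of $3$-manifolds with effective circle actions by Fintushel's invariants for local $S^1$-actions (Theorem~\ref{THM:ORF}), and isolating the contribution of the topologically singular points. First I would establish the local model at a topologically singular point $x$. Since $\Sigma_xX\cong\mathbb{R}P^2$ is not a spherical suspension, a transverse slice argument shows $x$ cannot lie on a circle leaf (else $\Sigma_xX$ would be $\Susp$ of the slice's space of directions, hence $S^2$); thus $x$ is a fixed point of the local action, and a tubular neighborhood of $x$ is equivariantly homeomorphic to the cone $C(\Sigma_xX)=C(\mathbb{R}P^2)$ carrying an isometric $S^1$-action. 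An Euler-characteristic count on the one-dimensional orbit space $\mathbb{R}P^2/S^1$ (a segment, whose two endpoints contribute $1$ or $0$ according as they are fixed points or exceptional orbits, with $\chi(\mathbb{R}P^2)=1$) forces exactly one fixed point and one exceptional orbit with $\mathbb{Z}_2$-isotropy, which is the action induced on $S^2/\{\pm1\}$ by a rotation of $S^2$. Hence a neighborhood of $x$ is the standard $S^1$-cone on $\mathbb{R}P^2$, whose orbit space is a $2$-disk with $x^*$ on the boundary, sitting between an arc of images of fixed points and an arc of images of $\mathbb{Z}_2$-exceptional orbits.

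Next I would analyze the global orbit space $X^*=X/\!\!\sim$. Away from the finitely many topologically singular points the action is locally smooth, so Fintushel's local analysis applies there; combined with the model above this shows $X^*$ is a compact $2$-manifold with boundary, and that the orbit structure consists of the types in Theorem~\ref{THM:ORF} together with \emph{one new type} of boundary circle: along it the preimage in $X$ alternates between ``fixed-point solid torus'' pieces and ``$\mathbb{Z}_2$-disk'' pieces, the two kinds of arcs being separated by images of topologically singular points. Each new-type circle therefore carries an even number of marked points, $r_i$ (resp.\ $q_j$), with $\sum r_i+\sum q_j=2r$; the splitting of the $s$ new circles into $s-k_3$ and $k_3$ records, exactly as $k_1$ and $k_2$ do in Theorem~\ref{THM:ORF}, an orientation/twisting datum of the pieces glued along them. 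This accounts for all the invariants in part~(1).

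For part~(2) I would use that $\Susp(\mathbb{R}P^2)=C(\mathbb{R}P^2)\cup_{\mathbb{R}P^2}C(\mathbb{R}P^2)$ with the suspended rotation action, and that an equivariant connected sum along an invariant $2$-sphere through fixed points (equivalently, at a fixed point) is well defined. Along each new-type circle of $X^*$, group its marked points into $r_i/2$ (resp.\ $q_j/2$) consecutive pairs; for each pair, the orbit-space picture together with the local model of Step~1 produces an invariant embedded $2$-sphere in $X$ — namely the preimage of an arc in $X^*$ with both endpoints on a fixed-point arc and enclosing that pair of marked points and the intervening $\mathbb{Z}_2$-arc — which cuts off a punctured copy of $\Susp(\mathbb{R}P^2)$ containing precisely those two topologically singular points. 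Cutting along all such spheres and capping with invariant $3$-balls replaces each new-type boundary circle of $X^*$ by an ordinary fixed-point circle, and yields a closed $3$-manifold $M$ with a local $S^1$-action such that $X$ is equivariantly homeomorphic to $M\#\,r\,\Susp(\mathbb{R}P^2)$. Tracking the orbit data through this surgery shows the invariants of $M$ in Theorem~\ref{THM:ORF} are $\{b;\varepsilon,g,(f+s,k_1+k_3),(t,k_2);\{(\alpha_i,\beta_i)\}\}$, which is part~(2); running the construction backwards (inserting $r_i/2$, resp.\ $q_j/2$, suspension summands along prescribed fixed-point circles of a manifold given by Theorem~\ref{THM:ORF}) realizes every admissible tuple, and since two spaces with the same tuple have the same manifold part $M$ (by Theorem~\ref{THM:ORF}) and the same combinatorial placement of summands, they are equivariantly homeomorphic; this gives the one-to-one correspondence in part~(1).

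\textbf{Main obstacle.} I expect the crux to be the two rigidity/surgery steps: first, the equivariant uniqueness of the local model $C(\mathbb{R}P^2)$ at topologically singular points (justifying that an isometric $S^1$-action there is equivariantly standard, not merely up to homeomorphism of orbit spaces); and second, carrying out the equivariant surgery so that the topologically singular points organize cleanly into $\Susp(\mathbb{R}P^2)$-summands, with the correct matching of the new orbit data $(s,k_3)$ and the tuples $(r_i)$, $(q_j)$ against the change of $(f,k_1)$ to $(f+s,k_1+k_3)$ in Fintushel's invariants — in particular the orientation bookkeeping hidden in $\varepsilon$ and the indices $k_1,k_2,k_3$, and the verification that no orbit type other than the ones above can occur.
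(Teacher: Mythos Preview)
Your overall architecture---isolate the singular points as $SF$-fibers with the standard cone model, read off an enlarged set of Fintushel invariants, and exhibit $X$ as an equivariant connected sum of a manifold $M$ with copies of $\Susp(\mathbb{R}P^2)$---matches the paper's strategy. Two points, however, separate your outline from a complete argument, and they are precisely where the paper invests most of its work.

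\textbf{The realization step (surjectivity in part~1).} You write that ``running the construction backwards \ldots\ realizes every admissible tuple''. This produces only a \emph{topological} space with a \emph{topological} local circle action. The theorem is about closed Alexandrov $3$-spaces with \emph{isometric} local actions, so you must exhibit an Alexandrov metric for which the local action is by isometries. This is not automatic: the paper devotes all of Section~\ref{S:CONSTRUCTIONS} to it, lifting the action to the orientable double branched cover (Proposition~\ref{T:DBL_BRANCHED_COVER}), assembling an invariant Riemannian orbifold metric via an equivariant partition of unity (Proposition~\ref{L:RIEMANNIAN_METRIC}), and then checking---with some care, since the action is only \emph{locally} a group action---that the induced length metric is preserved (Proposition~\ref{L:METRIC}). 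Your outline does not touch this, and without it the one-to-one correspondence in part~(1) is only half proved.

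\textbf{The uniqueness step and the direction of the surgery.} For part~(2) you cut $X$ along invariant $2$-spheres to extract the suspension summands and leave a manifold $M$. The paper goes in the opposite direction: it starts from the manifold $M$ determined by the reduced invariants, forms $X'=M\#R_{r_1/2}\#\cdots$ by equivariant connected sums, and then proves $X'\cong X$. The tool for this last identification is a \emph{cross-section} of the fiber map (Lemma~\ref{L:SECTION}), after which the classical arguments of Raymond \cite[Lemma~3]{R} and \cite[Corollary~4.5]{NZ} force the equivariant type. Your injectivity argument (``same manifold part $M$ and same combinatorial placement of summands'') implicitly assumes that the equivariant connected-sum decomposition is unique and that an equivariant homeomorphism $M_1\to M_2$ extends across the summands; neither is free, and the cross-section is exactly what makes this rigorous in the paper. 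Your direct approach via invariant spheres is geometrically appealing and could likely be pushed through, but you would still need a substitute for the cross-section to pin down the invariant $b$ and to conclude that two spaces with the same tuple are equivariantly equivalent.

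Your identification of the ``main obstacles'' is accurate for the local model and the bookkeeping, but you have not flagged the Alexandrov-metric realization, which is in fact the longest and most delicate part of the paper's proof.
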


We remark that the set of closed Alexandrov $3$-spaces admitting an effective and isometric local circle action is contained in the set of collapsing Alexandrov $3$-spaces considered by Mitsuishi and Yamaguchi in \cite{MY}. In our case, there are no singular fibers having neighborhoods of the form $B(pt)$ (see \cite[Definition 2.48]{MY}). The structure induced by the local action allows us to give, via Theorem~\ref{THM:INVARIANTS}, an alternative description to the one in \cite{MY} for those spaces without $B(pt)$ building blocks, exhibiting them as one of the connected sums in the theorem. Additionally, we conclude that whenever a sequence of closed Alexandrov $3$-spaces collapses to an Alexandrov surface (with or without boundary) without singular fibers of type $B(pt)$, this collapse is given by an effective and isometric local circle action (see Corollary~\ref{C:COLLAPSE}).  
This is reminiscent of the fact that Riemannian manifolds that collapse with  bounded sectional curvature admit a so-called \emph{$F$-structure}, which is, roughly speaking, a generalized local torus action (see \cite{ChGr1,ChGr2}). 
\\

The contents of the present article are organized as follows. In Section \ref{S:PRELIM} we recall some basic results on isometric Lie group actions on Alexandrov spaces. In Section \ref{S:Orbit_types} we describe the structure of the fiber space of a closed Alexandrov $3$-space $X$ with an effective, isometric local circle action. We also assign equivariant and topological invariants to $X$.  In Section~\ref{S:CONSTRUCTIONS} we show how to construct closed Alexandrov $3$-spaces with prescribed local circle actions out of a given system of invariants. Section \ref{S:CLASS_GENERAL} contains the proof of Theorem \ref{THM:INVARIANTS}. Finally, in Section \ref{S:COLLAPSE} we relate our results to those in the collapsing theory of Alexandrov $3$-spaces.


\begin{ack} The authors would like to thank Luis Guijarro for helpful conversations. 
\end{ack}


\section{Preliminaries}
\label{S:PRELIM}

In this section we recall some basic facts on isometric actions of compact Lie groups on finite-dimensional Alexandrov spaces. We refer the reader to \cite{Br} for a thorough exposition on the general theory of compact transformation groups. The elements of Alexandrov geometry can be found in \cite{BBI} and \cite{BGP}. 


\subsection*{Group actions}
Let $X$ be a finite-dimensional Alexandrov space.
As in the Riemannian case, the isometry group $\mathrm{Isom}(X)$ of $X$ is a Lie group (see \cite{FY}) and is compact (in the compact-open topology) whenever $X$ is compact. Let $G\times X\to X$ be an isometric action of a compact Lie group $G$ on $X$. The \emph{orbit} of a point $x\in X$ is the set $G(x)=\{ gx\in X \mid g\in G\}$; the \emph{isotropy group} at $x$ is the closed subgroup $G_x=\{g\in G \mid gx=x \}$. There is a natural homeomorphism $G(x)\cong G/G_x$, for each $x\in X$. The closed subgroup $\bigcap_{x\in X}G_x$ of $G$ is the \textit{ineffective kernel} of the action. If the ineffective kernel is trivial, we will say that the action is \textit{effective}. In what follows we will only consider effective actions.  

Given a subset $A\subseteq X$, we denote its image with respect to the orbit projection map $\mathsf{p}: X\to X/G$ by $A^*$. In particular, $X^{*}=X/G$ is the orbit space of the action. It was proved in \cite[Section 4.6]{BGP} that the orbit space $X^*$ is an Alexandrov space with the same lower curvature bound as $X$. Given a subset $A$ of the space of directions $\Sigma_xX$ of $X$ at $x$, the \textit{set of normal directions to $A$} is, by definition, the set
\[
A^{\perp}:=\{ v\in \Sigma_xX \mid d(v,w)=\mathrm{diam}(\Sigma_xX)/2 \ \  \text{for all} \ \ w\in A  \}.
\]
The following proposition describes the tangent and normal spaces to the orbits (cf. \cite[Proposition 4]{GS}). 


\begin{prop}
Let $X$ be an Alexandrov space admitting an isometric $G$-action and fix $x\in X$ with $\dim G/G_x>0$. If $S_x\subset \Sigma_xX$ is the unit tangent space to the orbit $G(x)\cong G/G_x$, then the following hold:
\begin{itemize}
\item[(1)] The set $S_x^{\perp}$ is a compact, totally geodesic Alexandrov subspace of $\Sigma_xX$ with curvature bounded below by $1$, and the space of directions $\Sigma_xX$ is isometric to the join $S_x\star S_x^{\perp}$ with the standard join metric. 

\item[(2)] Either $S_x^{\perp}$ is connected or it contains exactly two points at distance $\pi$.
\end{itemize}
\end{prop}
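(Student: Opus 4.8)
The plan is to prove (1) by identifying $S_x$ with a round sphere and splitting $\Sigma_xX$ as a metric join, and then to read off (2) from the resulting structure of $S_x^\perp$. Write $k=\dim G/G_x\ge 1$, so that the orbit $G(x)\cong G/G_x$ is a $k$-dimensional homogeneous space and $\Sigma_xX$ is a compact Alexandrov space with $\curv\ge 1$. First I would observe that $G$ acts transitively on $G(x)$ by isometries of $X$, which restrict to isometries of $G(x)$ with its induced length metric; hence $G(x)$ is a homogeneous Riemannian manifold and its space of directions at $x$ is a round unit sphere $S^{k-1}$. Since angles between directions are first-order quantities and the inclusion $G(x)\hookrightarrow X$ preserves first-order metric data, this sphere embeds isometrically as $S_x\subset\Sigma_xX$. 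Because $k\ge 1$, $S_x$ already contains an antipodal pair at distance $\pi$, so $\mathrm{diam}(\Sigma_xX)=\pi$ and the definition of $S_x^\perp$ becomes $S_x^\perp=\{v\in\Sigma_xX: d(v,w)=\pi/2 \text{ for all } w\in S_x\}$.

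Next I would split $\Sigma_xX$ using maximal-diameter rigidity for Alexandrov spaces with $\curv\ge 1$. An antipodal pair $p,-p\in S_x$ at distance $\pi$ yields, by the suspension theorem, an isometry $\Sigma_xX\cong\Susp(Z)$ where $Z=\{v: d(v,p)=d(v,-p)=\pi/2\}$ is again compact with $\curv\ge 1$; iterating over a spanning family of antipodal pairs in $S_x$ peels off the entire round sphere and produces an isometry $\Sigma_xX\cong S_x\star S_x^\perp$, with $S_x^\perp$ the final cross-section. Standard properties of the spherical join then give that $S_x^\perp$ is compact, totally geodesic, and has $\curv\ge1$, which is (1). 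I expect this step to be the main obstacle: making the embedding $S_x\hookrightarrow\Sigma_xX$ isometric (controlling the tangent directions to the orbit, for which I would lean on the Isotropy Lemma) and checking that the cross-sections produced by the iterated suspensions coincide with $S_x^\perp$ as defined.

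Finally, for (2) I would argue directly from the fact that $S_x^\perp$ is a compact Alexandrov space with $\curv\ge1$, hence of diameter $\le\pi$. If $\dim S_x^\perp\ge1$ it is a complete length space of finite diameter and therefore connected. If $\dim S_x^\perp=0$ it is a finite set, and the classification of $0$-dimensional Alexandrov spaces with $\curv\ge1$ leaves only a single point (connected) or two points at distance $\pi$; the geometric obstruction to more points is that maximal-diameter rigidity forces each point to have at most one point at distance $\pi$ from it, so three mutually distant points cannot occur. In every case $S_x^\perp$ is either connected or consists of exactly two points at distance $\pi$, as claimed.
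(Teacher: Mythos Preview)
The paper does not prove this proposition at all: it is stated in Section~\ref{S:PRELIM} as background, with the parenthetical ``(cf.~\cite[Proposition~4]{GS})'' pointing to Galaz-Garc\'ia and Searle, \emph{Cohomogeneity one Alexandrov spaces}, where the result is established. So there is no in-paper argument to compare your proposal against.

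That said, your outline is essentially the standard route and is in the spirit of the proof in \cite{GS}: identify $S_x$ with a round unit sphere, then iteratively apply the maximal-diameter (suspension) rigidity theorem for spaces with $\curv\ge 1$ to peel off $S_x$ and obtain the join splitting $\Sigma_xX\cong S_x\star S_x^\perp$. You have correctly flagged the genuine subtlety, namely that $S_x$ sits inside $\Sigma_xX$ as a \emph{round} unit sphere; the cleanest way to secure this is not via the intrinsic metric of the orbit but via the isotropy action of $G_x$ on $\Sigma_xX$, which restricts to a transitive isometric action on $S_x$ (this is where the Isotropy Lemma you mention actually enters). Your argument for (2) is fine once one adopts the standard convention that a $0$-dimensional Alexandrov space with $\curv\ge 1$ is either a point or a pair of points at distance exactly $\pi$; if you want to avoid relying on that convention, you can instead argue that the iterated suspension construction produces $S_x^\perp$ as a space of directions of a cross-section, and spaces of directions of $1$-dimensional Alexandrov spaces have this form.
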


The (Euclidean) cone of an Alexandrov space $Y$ of $\mathrm{curv}\geq 1$ is denoted by $K(Y)$ and it is assumed to have the standard Euclidean cone metric.

Let $X$ be a three-dimensional Alexandrov space without boundary and an isometric circle action. Because of the low codimension of the orbits, the Slice Theorem (cf.~\cite[Theorem~B]{HS}) holds  by purely topological reasons (see \cite{F} and \cite[Remark 4.6]{NZ}) and a slice at $x$ is equivariantly homeomorphic to $K(S^{\bot}_x)$. It follows that  $\Sigma_{x*}X^*$, the space of directions at $x^*$ in $X^*$, is isometric to $S^{\bot}_x/G_x$. We will also use the Alexandrov versions of the isotropy Lemma (\cite[Lemma 2.1]{GG}) and the Principal Orbit Theorem (\cite[Theorem 2.2]{GG}). 

Let $G$ act isometrically on two Alexandrov spaces $X$ and $Y$. A mapping $\varphi: X\rightarrow Y$ is \textit{weakly $G$-equivariant} if, for every $x\in X$ and $g\in G$, there exists an automorphism $f$ of $G$ such that $\varphi(gx)= f(g)\varphi(x)$; if $f$ is the identity homomorphism, then we simply say that $\varphi$ is \textit{$G$-equivariant}. If it is clear which group $G$ is, we will only say that $\varphi$ is \textit{weakly equivariant} or \textit{equivariant}, depending on the situation. Two actions of $G$ on $X$ are said to be \textit{equivalent} if there exists a weakly equivariant homeomorphism from $X$ onto itself.


\subsection*{Three-dimensional Alexandrov spaces}
 We briefly recall the basic \linebreak structure of closed Alexandrov spaces of dimension $3$. For more details on these spaces, we refer the reader to \cite{GG1}.
 
Let $X$ be a closed Alexandrov $3$-space.  Recall that the space of directions at each point in $X$ is a closed $2$-dimensional Alexandrov space with curvature bounded below by $1$. Therefore, the Bonnet--Myers Theorem  (see \cite[Theorem 10.4.1]{BBI}) implies that $\Sigma_xX$ has finite fundamental group, and is therefore homeomorphic to either a $2$-sphere $\mathbb{S}^2$ or a real projective plane $\mathbb{R}P^2$. The set of points having space of directions homeomorphic to $\mathbb{S}^2$ is open and dense in $X$. We call a point in $X$ \emph{topologically regular} if its space of directions is homeomorphic to $\sphere^2$ and \emph{topologically singular} if its space of directions is homeomorphic to $\Real P^2$.

The Conical Neighborhood Theorem of Perelman \cite[I.\ Local Theorem]{Per2} states that every point $x$ in an Alexandrov space has a neighborhood pointed-homeomorphic to the cone over the space of directions at $x$. As a consequence of this result, the set of points in $X$ with space of directions $\mathbb{R}P^2$ is finite and $X$ must be homeomorphic to a compact $3$-manifold with finitely many $\mathbb{R}P^2$-boundary components to which one glues in cones over $\mathbb{R}P^2$. It is not difficult to see that $X$ must have an even number of topologically singular points.

A closed non-manifold Alexandrov $3$-space $X$ can also be described as a quotient of a closed, orientable, topological $3$-manifold $\tilde{X}$ by an orientation-reversing involution $\iota:\tilde{X}\to \tilde{X}$ with only fixed points. The manifold $\tilde{X}$ is the \textit{ orientable double branched cover of $X$} (see, for example, \cite[Lemma 1.7]{GG1}).  Furthermore, the metric on $X$ can be lifted to $\tilde{X}$ so that $\tilde{X}$ is an Alexandrov space with the same curvature bound as $X$  and $\iota:\tilde{X}\to \tilde{X}$ is an isometry with respect to the lifted metric; in particular, $\iota$ is equivalent to a smooth involution on the $3$-manifold $\tilde{X}$  (see \cite[Lemma 1.8]{GG1} and \cite[Section 5]{GW} for details).


\section{Local circle actions}
\label{S:Orbit_types}

Let $X$ be a closed, connected Alexandrov $3$-space. We say that $X$ admits an \emph{isometric local $S^1$-action} if it can be decomposed into (possibly degenerate) disjoint, simple, closed curves each having a tubular neighborhood which admits an effective, isometric  $S^1$-action (with respect to the  restricted metric) whose orbits are the curves of the decomposition. Here, by \textit{a tubular neighborhood} of a subset $A\subseteq X$, we mean an $\varepsilon$-neighborhood of $A$ for some small $\varepsilon>0$.   We call each element of the decomposition into curves of $X$ a \emph{fiber} and $X^{*}$ the \emph{fiber space} of the local $S^1$-action.  The \emph{fiber map} $\mathsf{p}:X\rightarrow X^*$ is defined as the map that coincides with the orbit maps of the circle actions on each of the neighborhoods of the decomposition of the local $S^1$-action. 
We point out that, as opposed to the manifold case, a sufficiently small tubular neighborhood of a fiber may not be homeomorphic to a disk-bundle over the fiber. This is the case, for example, when the fiber is a topologically singular point  $x\in X$ (corresponding to a so-called \emph{$SF$-fiber}, defined below): a small tubular neighborhood of $x$ is a cone over $\RP^2$ and corresponds to the preimage (under the fiber map) of a small neighborhood of $x^*$ in the fiber space. More generally, it follows from the definition of a local circle action that a small tubular neighborhood of a fiber is a union of curves which are elements of the decomposition of $X$. In other words, a small tubular neighborhood of a fiber is the preimage  of  a neighborhood of a point in the fiber space.

Following Orlik, Raymond \cite{OR2}, and Fintushel \cite{F}, we will study the structure of the fiber space, the types of fibers that can arise and compare them to the manifold case.


\subsection*{Fiber types.}
We now describe the possible fibers of the decomposition of $X$ induced by the local circle action. The fiber types consisting of topologically regular points were first considered in \cite{OR2,F}. We first list the possible degenerate fibers of the local circle action, i.e.\ those fibers consisting of just one point. 
These fibers correspond to the fixed points of the local circle action.
\\


\noindent\emph{$F$-fibers.} Topologically regular single-point fibers will be called \emph{$F$-fibers}. By \cite{MSY} a sufficiently small invariant metric ball around an $F$-fiber is equivariantly homeomorphic to an orthogonal action on a $3$-ball. 
We denote the set of $F$-fibers by $F$.
\\


\noindent\emph{$SF$-fibers.} Let $V$ be a small invariant tubular neighborhood of a fiber of the local circle action on $X$. Suppose that $V$ contains a topologically singular point $p$ of $X$. Since the $S^1$-action on $V$ is isometric, $p$ is a fixed point. In other words, topologically singular points of $X$ are degenerate fibers of the decomposition of $X$ into curves. A sufficiently small invariant metric ball $B_r(p)$ around $p$ is homeomorphic to a cone over $\Real P^2$. Therefore, by \cite[Corollary 4.5]{NZ}, the restriction of any local $S^1$-action on $X$ to $B_r(p)$ is equivalent to the cone of the standard cohomogeneity one circle action on the unit round $\Real P^2$. 
We describe this action explicitly. 

We let each point in $\Real P^2$ be an equivalence class $[r,\theta]$, where $(r,\theta)$ lies on the unit disk with polar coordinates $0\leq r \leq 1$, $0\leq \theta \leq 2\pi$, and $(1,\theta)$ is identified with $(1, \theta + \pi)$ for all $\theta$. Therefore, the points of the cone over $\Real P^2$ are equivalence classes $[[r,\theta], t]$,  where $[r,\theta]\in \Real P^2$, $0\leq t\leq 1$, and the points of the form $([r,\theta],0)$ represent a single point. We let $K(\Real P^2)$ be the cone over $\Real P^2$. Then, for every $0\leq \varphi\leq 2\pi$, the cone of the standard cohomogeneity one circle action on the unit round $\Real P^2$ is given by 
\begin{eqnarray*}
S^1 \times K(\Real P^2) & \longrightarrow & K(\Real P^2) \\
(\varphi, [[r,\theta], t]) & \longmapsto & [[r,\theta + \varphi], t].
\end{eqnarray*}

We will call topologically singular fibers \emph{$SF$-fibers}. Observe that $SF$-fibers are isolated in $X$. 
We denote the stratum of $SF$-fibers by $SF$.
\\

We now describe the possible non-degenerate fibers. Observe that the notion of local orientability makes sense at topologically regular points of $X$. 
\\


\noindent\emph{$E$-fibers.} Let $\mu$ and $\nu$ be two relatively prime integers satisfying $0<\nu<\mu$. Consider the following $S^1$ action over a solid torus:
\begin{eqnarray*}
S^1 \times \left(D^2\times \mathbb{S}^1\right) & \longrightarrow & \left(D^2\times \mathbb{S}^1\right) \\
(z, \rho e^{i\theta}, e^{i\psi}) & \longmapsto & (z^{\nu}\rho e^{i\theta}, z^{\mu}e^{i\psi}).
\end{eqnarray*}
 The curve $\rho = 0$ will be called an \emph{exceptional fiber} or \emph{$E$-fiber}. Tubular neighborhoods of $E$-fibers are of the form $D^2\times_{\mathbb{Z}_\mu}\sphere^1$ where $\mathbb{Z}_{\mu}$ acts on $D^2$ by rotations without reversing the local orientation and with the $E$-fiber corresponding to the curve $\{0\}\times_{\mathbb{Z}_\mu}\sphere^1$. We assign Seifert invariants $(\alpha, \beta)$ to the $E$-fiber as in \cite{OR} (see also \cite{O}). 
 We denote the stratum of $E$-fibers by $E$.
 \\
 
 
\noindent\emph{$SE$-fibers.} We consider now the case where $\mathbb{Z}_2$ acts by reflexion with respect to an axis of $D^2$, reversing the local orientation. To describe the action, we let $J=(-1,1)$, $I$ be an open interval and identify $D^2$ with $J\times I$. Then we have the following $\mathbb{Z}_2$ action on $D^2\times \sphere^1$:
\begin{eqnarray*}
\mathbb{Z}_2 \times \left(J\times I\times \mathbb{S}^1\right) & \longrightarrow & \left(J\times I\times \mathbb{S}^1\right) \\
(t, u, z) & \longmapsto & (-t, u, -z).
\end{eqnarray*}
The quotient $D^2\times_{\mathbb{Z}_2}\sphere^1$ of the previous action is homeomorphic to $\mathrm{Mo}\times I$, where $\mathrm{Mo}$ is the M\"{o}bius band $(J\times S^1)/(t,z)\!\sim\! (-t,-z)$. We denote the elements of  $D^2\times_{\mathbb{Z}_2}\sphere^1$ by $[t,u,z]$. Now, we consider the following circle action: 
\begin{eqnarray*}
S^1 \times (D^2\times_{\mathbb{Z}_2} \sphere^1) & \longrightarrow & D^2\times_{\mathbb{Z}_2} \sphere^1 \\
(w,[t,u,z]) & \longmapsto & [t,u,wz].
\end{eqnarray*}  

 The fibers of the previous action intersecting $(\{0\}\times I)\times_{\mathbb{Z}_2}\sphere^1$ 
 will be called \emph{special exceptional fibers} or \emph{$SE$-fibers}. We denote the stratum of $SE$-fibers by $SE$.
 \\
 
 
 \noindent\emph{$R$-fibers.} Fibers that are not $F$-, $SF$-, $E$- or $SE$-fibers will be called \emph{$R$-fibers}. 
 The restriction of the fiber map $\mathsf{p}$ to the stratum of $R$-fibers is an $S^1$ fiber bundle with structure group $\mathrm{O}(2)$.  
 We denote the stratum of $R$-fibers by $R$.
 
 
\subsection*{Fiber space structure.} Observe first that, since the fiber projection map $\mathsf{p}:X\to X^*$ is a local submetry,  $X^*$ satisfies the triangle comparison condition locally and is therefore a $2$-dimensional Alexandrov domain (see \cite[Corollary in p.~16]{BGP}). Therefore, $X^*$ is a topological $2$-manifold (see \cite[Corollary 10.10.3]{BBI}) and its boundary is composed of the images of $F$-, $SF$- and $SE$-fibers under the fiber map, while the interior of $X^*$ consists of $R$-fibers and a finite number of $E$-fibers. This structure for $X^*$ also follows, using  purely topological arguments, from 
\cite[Lemma~1]{R} (see also \cite[Section~1.9, Lemma~1]{O}) and \cite[Proposition 3.2]{NZ}.


\subsection*{Block types.}
As shown in  \cite[Section 2]{F} and  \cite[Section 1]{OR}, a closed $3$-manifold with a local circle action can be decomposed into different kinds of building blocks;  these arise when considering tubular neighborhoods of connected components of fibers of the same type.
We now recall the definition of these blocks in the manifold case and define new types of building blocks to account for the presence of topologically singular points in a non-manifold three-dimensional Alexandrov space.


\subsubsection*{Manifold blocks.} We first list the blocks that contain only topologically regular points; these arise when considering local circle actions on $3$-manifolds.
\\
 
\noindent\emph{$E$-blocks.} We will call a small tubular neighborhood of an exceptional fiber with Seifert invariants $(\alpha,\beta)$ an \emph{$E$-block of type $(\alpha,\beta)$}. The orbit space of an $E$-block is a $2$-disk where a single point on the interior corresponds to the exceptional fiber and the rest of the fibers have trivial isotropy. 
\\


\noindent\emph{Simple and twisted $F$-blocks.} Let $C_F$ be a component of $F$-fibers and let $V$ be a sufficiently small tubular neighborhood of $C_F$. Since $\partial V$ is composed of $R$-fibers, we have the following possibilities for $\pi|_{V}:V\rightarrow V^{*}$. If $\partial V$ is orientable, then it is the trivial bundle $\sphere^1\times \sphere^1$ and $V$ is a solid torus equipped with the following circle action:
\begin{eqnarray*}
S^1 \times (D^2\times \sphere^1) & \longrightarrow &  D^2\times \sphere^1 \\
(z,\rho e^{i\theta}, e^{i\psi}) & \longmapsto & (z\rho e^{i\theta}, e^{i\psi} ).
\end{eqnarray*} 
Here, the curve $\rho=0$ is the $F$-component. We call $V$ a \emph{simple $F$-block}.

If $\partial V$ is non-orientable, then it is the non-orientable bundle $\sphere^1\tilde{\times}\sphere^1$, that is, a Klein bottle $K$. Therefore $V$ is homeomorphic  to a solid Klein bottle $K\times [0,1]$ with the fibers contained in $K\times \{0\}$ collapsed to points, which correspond to $C_F$. 
 We will call this block $V$ a  \emph{twisted $F$-block}.
 Any local circle action on a twisted $F$-block coincides locally, around a point on $C_F$, with the circle action described for simple $F$-blocks. However, that circle action cannot be extended to a global circle action on a twisted $F$-block, since there is no coherent choice of orientations for its fibers. In other words, the structure group of the bundle $\partial V$ does not reduce to $\SO(2)$ on a twisted $F$-block.  
\\


\noindent\emph{Simple and twisted $SE$-blocks.} Let $C_{SE}$ be a component of $SE$-fibers and let $U$ be a sufficiently small tubular neighborhood of $C_{SE}$. As in the case of $C_F$ we look at the restricted bundle $\pi|: U \rightarrow U^{*}$. If $\partial U$ is orientable, then $U$ is equivariantly homeomorphic to $\mathrm{Mo}\times\mathbb{S}^1$, the product of the M\"{o}bius band and a circle (see \cite[Section 1]{OR2} and  \cite[Section 1.8]{O}). We will call $U$ a \emph{simple $SE$-block.}

If $\partial U$ is non-orientable, then the block is $\mathrm{Mo}\tilde{\times}\mathbb{S}^1$, the non-trivial $\mathrm{Mo}$-bundle over $\mathbb{S}^1$. Explicitly, the block can be described as $[0,1]\times\mathbb{S}^1\times \mathbb{S}^1$ after identifying $(0,e^{i\theta},e^{i\psi})\sim(0,e^{i\theta},e^{i\psi+\pi})$ and then taking the image of each $\mathbb{S}^1\times \mathbb{S}^1\times\{t\}$ under the usual covering of the Klein bottle by the torus. 
We will call $U$ a \emph{twisted $SE$-block.}


\subsubsection*{Non-manifold blocks.} In addition to the manifold blocks described in \cite{OR} and \cite{F}, we define two more types of building blocks. These contain topologically singular points and arise when considering local circle actions on non-manifold Alexandrov $3$-spaces.
\\


\noindent\emph{Simple $SF$-blocks.} Let $(\mathrm{Susp}(\Real P^2), d_0)$ be the suspension of the real projective plane (with constant curvature $1$) equipped with the standard spherical suspension Alexandrov metric of curvature bounded below by $1$. Consider the space $(\mathrm{Susp}(\Real P^2), d_0)$ with the suspension of the standard circle action on $\Real P^2$.
 We let $R_k$ be the \emph{equivariant connected sum} (as in  \cite[Section 4.1]{NZ}) of $k$ copies of $\mathrm{Susp}(\Real P^2)$. By  \cite[Corollary 4.5]{NZ} there is a unique (up to equivalence) isometric, effective circle action on $R_k$. We define the \emph{simple $SF$-block} as $R_k\setminus (D^2\times \mathbb{S}^1)$ equipped with the restricted circle action, and with the invariant solid torus removed from the principal part of $R_k$. The orbit space of a simple $SF$-block is an annulus with the following structure: the interior and one of the boundary components of the orbit space are composed of principal orbits; the remaining boundary component is made up of a union of arcs joined by their endpoints, alternating between $S^1$ and $\mathbb{Z}_2$ isotropy groups. 
\\


\noindent\emph{Twisted $SF$-blocks.} Let $V$ be a twisted $F$-block and  let $R_k$ be the equivariant connected sum of $k$ copies of $\Susp(\mathbb{R}P^2)$ (as in the construction of the simple $SF$-block). We consider $F$-fibers $x$ in $V$ and $y$ in $R_k$, and let $B_x$ and $B_y$ be small invariant open neighborhoods of $x$ and $y$, respectively. We may assume that $B_x$ and $B_y$ are small enough so that their closures contain only topologically regular fixed points and points with trivial isotropy.  By the description of the local circle action on $V$ and the global circle action on $R_k$, we can further assume that $\overline{B}_x$ and $\overline{B}_y$, the closures of $B_x$ and $B_y$, respectively,  are equivariantly homeomorphic to closed $3$-balls with effective and isometric circle actions. We define a \textit{twisted $SF$-block} as $V\# R_k$, the equivariant connected sum of $V$ and $R_k$ along $B_x$ and $B_y$ (as in  \cite[Section 4.1]{NZ}).


\subsection*{Fiber space invariants.} Besides the information given by the local action, we must consider the topological type of the fiber space $X^*$ as well. We will list invariants associated to the topological type of $X^{*}$ and the local $S^1$-action. To this end, we  recall the classification up to weak bundle equivalence of $S^1$ bundles with structure group $\mathrm{O}(2)$ over a compact $2$-manifold with boundary (see \cite[Section~1]{F}, also \cite{O,OR2,OVZ}). 


\begin{definition}Let $\pi_i: E_i\to B_i$, $i=1,2$, be $S^1$ fiber bundles with structure group $\mathrm{O}(2)$. A \emph{weak equivalence} between these bundles is a homeomorphism $\varphi:E_1\to E_2$ that covers a homeomorphism $f:B_1\to B_2$, i.e.~the diagram
\[
\xymatrix{
E_1 \ar[r]^{\varphi}\ar[d]_{\pi_1}& E_2\ar[d]^{\pi_{2}} \\
B_1  \ar[r]^f&  B_2
}
\]
commutes. If the total spaces $E_1,E_2$ are orientable, we require that an orientation be chosen for $E_1$ and $E_2$ and that $\varphi$ preserves orientation. If either of the structure groups reduces to $\SO(2)$ we demand that $\varphi$ be a bundle map with respect to $\SO(2)$. Two $S^1$ fiber bundles with structure group $\mathrm{O}(2)$ are \emph{weakly equivalent} if there exists a weak equivalence between them. 
\end{definition}

Recall that the fundamental group of a compact, connected $2$-manifold $B$ with genus $g$ and $m>0$ boundary components has the presentation 
\[
( a_j, b_j, s_i \ | \ s_1\cdots s_m [a_1,b_1]\cdots[a_g,b_g] )
\] 
if $B$ is orientable, and 
\[
(v_j, s_i \ | \ s_1\cdots s_mv_1^2\cdots v_g^2)\]
 if $B$ is nonorientable.


\begin{thm}[\protect{cf.~\cite[Theorem~1]{F}}]
\label{THM:CLASS_BUNDLES}
Let $B$ be a compact, connected $2$-manifold with $m>0$ boundary components and genus $g$. Then the set of weak equivalence classes of circle bundles over $B$ with structure group $\mathrm{O}(2)$ is in one-to-one correspondence with the pairs $(\varepsilon, k)$ where $k\geq 0$ is an even integer corresponding to the number of $s_i$ in the presentation of $\pi_1(B)$ that reverse orientation along fibers. The symbol $\varepsilon$ can take the values $o_1, o_2, n_1, n_2, n_3, n_4$ representing the following classes:
\begin{itemize}
\item[$o_1$:] $B$ is orientable and all $a_j$, $b_j$ preserve orientation.
\item[$o_2$:] $B$ is orientable, all $a_j$, $b_j$ reverse orientation and $g\geq 1$. 
\item[$n_1$:] $B$ is non-orientable, all $v_j$ preserve orientation and $g\geq 1$. 
\item[$n_2$:] $B$ is non-orientable, all $v_j$ reverse orientation and $g\geq 1$.
\item[$n_3$:] $B$ is non-orientable, $v_1$ preserves orientation, all other $v_j$ reverse orientation and $g\geq 2$.
\item[$n_4$:] $B$ is non-orientable, $v_1$ and $v_2$ preserve orientation, all other $v_j$ reverse orientation and $g\geq 3$. 
\end{itemize} 
If $k>0$, then the classes $o_1$ and $o_2$ collapse to a single class $o$, and the classes $n_1$, $n_2$, $n_3$ and $n_4$ collapse to a single class $n$.
\end{thm}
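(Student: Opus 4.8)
The plan is to translate the classification of circle bundles with structure group $\mathrm{O}(2)$ into a cohomological statement and then reduce it to an orbit count under the mapping class group of $B$. Since $B$ has $m>0$ boundary components, it has the homotopy type of a finite $1$-complex, so $H^2(B;L)=0$ for every local coefficient system $L$, and in particular $[B,BO(2)]\cong[B,K(\ZZ/2,1)]=H^1(B;\ZZ/2)$. Hence every principal $\mathrm{O}(2)$-bundle over $B$ is determined by its \emph{reversal class} $w\in H^1(B;\ZZ/2)$, recording which loops in $B$ reverse the orientation of the fibre; when $w=0$ the structure group reduces to $\SO(2)$, and since the Euler class lies in $H^2(B;\ZZ)=0$ the bundle is then the product $B\times\sphere^1$, which up to an automorphism of $\SO(2)$ accounts for a single class. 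Passing to the associated $\sphere^1$-fibre bundle (the $\mathrm{O}(2)$-action on $\sphere^1$ being faithful, nothing is lost), I would conclude that over a \emph{fixed} base $B$ these bundles are classified up to bundle equivalence covering $\mathrm{id}_B$ by $w$, and therefore the weak equivalence classes over $B$ are in bijection with the orbits of $\pi_0(\mathrm{Homeo}(B))$ acting on $H^1(B;\ZZ/2)$.

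Next I would isolate the invariant $k$. Every self-homeomorphism of $B$ permutes the boundary circles and so, up to isotopy and conjugacy, permutes the peripheral generators $s_1,\dots,s_m$ in the presentations recalled above; consequently $k:=\#\{\,i:w(s_i)=1\,\}$ depends only on the orbit of $w$. Evaluating $w$ on the single defining relator and using that $\ZZ/2$ is abelian — so that the images of the commutators $[a_j,b_j]$, respectively of the squares $v_j^2$, vanish — gives $\sum_i w(s_i)=0$, so $k$ is even and $0\le k\le m$.

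It remains to count orbits. If $k=0$, then $w$ lies in the image of $H^1(\hat B;\ZZ/2)\to H^1(B;\ZZ/2)$, where $\hat B$ is the closed surface obtained by capping off the boundary circles, and the residual action is that of $\pi_0(\mathrm{Homeo}(\hat B))$ on $H^1(\hat B;\ZZ/2)$. When $\hat B$ is orientable, $\mathrm{Sp}(2g,\ZZ/2)$ is transitive on the nonzero vectors of $H^1(\hat B;\ZZ/2)$, giving exactly the classes $o_1$ ($w=0$) and $o_2$ ($w\neq 0$, possible only for $g\ge 1$). When $\hat B$ is non-orientable one has $w\cdot w=\langle w_1(\hat B),w\rangle$ with $w_1(\hat B)=v_1^*+\dots+v_g^*\neq 0$; using the standard description of the action of the mapping class group of $N_g$ on mod-$2$ homology (transvections along two-sided simple closed curves together with crosscap slides), I would show that the three features ``$w=0$'', ``$w=w_1(\hat B)$'' (equivalently: the total space is orientable), and the value $w\cdot w\in\ZZ/2$ separate the remaining classes into exactly four orbits, with standard representatives in which $0$, $g$, $g-1$, $g-2$ of the $v_j$ reverse orientation; the requirement that each such representative be available forces $g\ge 1,\,1,\,2,\,3$, and these are $n_1,n_2,n_3,n_4$. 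If instead $k>0$, pick a boundary circle $s_i$ with $w(s_i)=1$; since boundary-parallel curves are two-sided one has $w(s_i)\neq w_1(B)(s_i)$, so the total space is automatically non-orientable and no orientability obstruction can prevent classes from merging. Performing handle slides — Dehn twists along two-sided simple closed curves in classes of the form $[\,\text{handle generator}\,]+[s_i]$ — one can alter $w$ on any chosen handle or crosscap generator, and iterating this reduces the genus part of $w$ to zero. Hence all classes with a given $k>0$ coincide, yielding the single class $o$ (if $B$ is orientable) or $n$ (if $B$ is non-orientable). Assembling the three cases produces the asserted bijection with pairs $(\varepsilon,k)$.

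The crux is the non-orientable subcase of the orbit count: one must verify that the mapping class group of $N_g$ realizes, by actual homeomorphisms, enough isometries of $(H_1(N_g;\ZZ/2),\cdot)$ to identify all classes sharing the three invariants above, and likewise that the handle slides used when $k>0$, and the change-of-coordinate maps used to reduce to $\hat B$, are realized by homeomorphisms of $B$ itself. This is exactly the analysis carried out by Fintushel in \cite[Section~1]{F} (see also \cite{O,OR2,OVZ}), which I would follow essentially verbatim; the only new element here is that the base is allowed to be an arbitrary compact surface with $m>0$ boundary components.
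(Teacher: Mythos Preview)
The paper does not prove this theorem at all: it is quoted as a known result from Fintushel~\cite[Theorem~1]{F} (with related references~\cite{O,OR2,OVZ}) and used as background input for the invariants of the fibre space. There is therefore no ``paper's own proof'' to compare against; your proposal actually supplies more than the paper does.

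As to the proposal itself, your outline is a faithful sketch of the argument in those references: the reduction via $H^2(B;-)=0$ to the reversal class $w\in H^1(B;\ZZ/2)$, the parity constraint on $k$ from the single relator, and the orbit count under the mapping class group are exactly the ingredients used there. You correctly identify the delicate point as the non-orientable subcase with $k=0$, and you defer to \cite{F,OVZ} for the verification that the listed invariants separate and exhaust the orbits; since the paper itself simply cites the result, this is entirely appropriate. One small caution: in your discussion of weak equivalence you should keep track of the orientation convention in the definition (when the total space is orientable, $\varphi$ must preserve orientation), as this is what distinguishes $o_1$ from $o_2$ and prevents an unwanted collapse when $k=0$; your remark that for $k>0$ the total space is automatically non-orientable is the mechanism by which that distinction disappears.
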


We now associate the following invariants to a fiber space $X^*$. Let $(\varepsilon,k)$ be the pair associated to the bundle of $R$-fibers with possible values as in Theorem \ref{THM:CLASS_BUNDLES}. We denote the genus of $X^{*}$ by $g\geq 0$.  We let $f, t, k_1, k_2$ be non-negative integers  such that $k_1 \leq f$ and $k_2\leq t$, where $k_1$ is the number of twisted $F$-blocks and $k_2$ is the number of twisted $SE$-blocks. Consequently $f-k_1$ is the number of simple $F$-blocks and $t-k_2$ is the number of simple $SE$-blocks. A non-negative integer $n$ will denote the number of $E$-fibers and we let $\{ (\alpha_i, \beta_i)\}_{i=1}^n$ be the corresponding Seifert invariants. We also let $b$ denote an integer or an integer mod $2$ with the following conditions: $b=0$ if $f+t>0$ or if $\varepsilon\in\{o_2,n_1,n_3,n_4 \}$ and some $\alpha_i=2$; $b\in\{0,1\}$ if $f+t=0$ and $\varepsilon\in\{ o_2,n_1,n_3,n_4\}$ and all $\alpha_i\neq 2$. In the remaining cases $b$ is an arbitrary integer. 
We let $s, k_3$ be non-negative integers, where  $k_3\leq s$ is the number of twisted $SF$-blocks. Hence $s-k_3$ is the number of simple $SF$-blocks, and we let $(r_1, r_2, \ldots, r_{s-k_{3}})$ and $(q_1, q_2, \ldots, q_{k_3} )$ be $(s-k_{3})$- and $k_3$-tuples of non-negative even integers corresponding to the number of topologically singular points in each  simple and twisted $SF$-block, respectively. The numbers $k$, $k_1$, $k_2$, and $k_3$ satisfy $k_1 + k_2 + k_3 = k$. Summarizing, to any fiber space $X^{*}$ we associate the set of invariants
\[
\left\{b; \varepsilon, g, (f,k_1), (t,k_2), (s,k_3); \{ (\alpha_i, \beta_i) \}_{i=1}^n; (r_1,r_2, \ldots, r_{s-k_{3}}); (q_1, q_2, \ldots, q_{k_3})\right\}.
\]

\begin{definition}[cf.~\protect{\cite[p.~150]{OR2} and \cite[p.~116]{F}}]
Let $X$ and $Y$ be two closed, connected Alexandrov $3$-spaces admitting local isometric $S^1$ actions. We will say that their fiber spaces are \emph{isomorphic} if there is a weight-preserving homeomorphism $X^{*} \rightarrow Y^{*}$, i.e.~a homeomorphism that preserves the fiber space invariants. In the case that $X^{*}$ and $Y^{*}$ are oriented we require the homeomorphism to be orientation-preserving. 
We will say that $X$ and $Y$ are \emph{equivariantly equivalent} if there is a fiber-preserving homeomorphism $X\rightarrow Y$ which is orientation preserving on $X\setminus (SE \cup SF)$ when $X\setminus (SE \cup SF)$ is oriented. 
In the case of a global circle action, one can show that equivariant equivalence reduces to equivariant homeomorphism (cf.~ \cite[p.~150]{OR2}). 
\end{definition}



\section{Constructing spaces with local circle actions}
\label{S:CONSTRUCTIONS}
In this section we show how to construct a closed three-dimensional Alexandrov space with an effective and isometric local $S^1$-action out of the set of invariants
\begin{align}
\label{EQ:INVARIANTS}
\left\{b; \varepsilon, g, (f,k_1), (t,k_2), (s,k_3); \{ (\alpha_i, \beta_i) \}_{i=1}^n; (r_1,r_2, \ldots, r_{s-k_{3}}); (q_1, q_2, \ldots, q_{k_3})\right\}.
\end{align}
defined in the previous section. These invariants determine a topological space with a topological effective local circle action in the following manner.

 If $f+t>0$ we let $X_0^{*}$ be a $2$-manifold of genus $g$ and $f+t+n+s$ boundary components which is orientable if $\varepsilon\in\{o,o_1,o_2\}$ and non-orientable if $\varepsilon \in \{n, n_1,n_2,n_3,n_4\}$. Let $X_0$ be the circle bundle with structure group $\mathrm{O}(2)$ over $X_0^{*}$ associated to $(\varepsilon, k)$,  with $k=k_1 + k_2 + k_3$. This bundle has a cross-section $q$ (see  \cite[Section 2]{OR2}). On $n$ of the torus boundary components $q$ restricts to curves $q_i$ and the structure group on these tori reduces to $\mathrm{SO}(2)$. This determines equivariant sewings for the $E$-blocks onto the $n$ torus components as in the proof of  \cite[Theorem 1.10]{O}. To the remaining boundary components we attach $k_1$  twisted $F$-blocks, $k_2$ twisted $SE$-blocks, $f-k_1$ simple $F$-blocks and $t-k_2$ simple  $SE$-blocks. Similarly, by means of fiber-preserving homeomorphisms we glue $s-k_3$ simple $SF$-blocks (the $j$-th block having $r_j$ topologically singular points) and $k_3$ twisted $SF$-blocks (where the $j$-th block has $q_j$ topologically  singular points). Then we let $X$ be the space obtained by this procedure.

If $f=t=0$, we let $M$ be the $3$-manifold determined by the set of invariants $\left\{b; \varepsilon, g, (s, k_3), (0,0); \{ (\alpha_i, \beta_i) \}_{i=1}^{n} \right\}$ as in \cite[Theorem 0]{OR2} and \cite[Theorem 2]{F}.  If $s=0$, then $M$ itself is the space we consider. Let us recall that the equivariant connected sum (see~\cite[Section 4.1]{NZ}) of $j$ copies of $\Susp(\mathbb{R}P^2)$ will be denoted by $R_j$ as in the construction of the simple $SF$-blocks in Section \ref{S:Orbit_types}. If $s>0$, then, for each $i=1,\ldots, s-k_3$, we take an equivariant connected sum of $R_{r_i/2}$ with $M$ centered at an $F$-fiber of $M$ lying on an $F$-component belonging to a simple $F$-block of $M$. We denote the space obtained by $Y$. Finally, for every $j=1,\ldots, k_3$, we perform an equivariant connected sum of $R_{q_j/2}$ with $Y$ centered at an $F$-fiber of $Y$ lying on an $F$-component belonging to a twisted $F$-block of $Y$. We let $X$ be the resulting space. 

The space $X$ constructed in the preceding paragraphs is only a topological space with an effective topological local circle action and, a priori, $X$ is not  an Alexandrov space. We will now show that it is possible to equip $X$ with an Alexandrov metric such that the local circle action is isometric. We distinguish two cases, depending on whether or not $X$ is a topological manifold. In the case where $X$ is not a topological manifold,  we observe that $X$ is 
 homeomorphic to a smooth
 orbifold since it is a quotient of its double branched cover, a closed  topological $3$-manifold, by an involution that is equivalent to a smooth involution (see Section~\ref{S:PRELIM}). We will first  prove that $X$ admits a Riemannian metric (or an orbifold Riemannian metric in the case that $X$ is not a manifold) such that the action is isometric (with respect to the Riemannian metric). Then we prove that the local circle action is isometric with respect to the distance induced by this metric. As a first step, we will show that, in the case where $X$ is not a manifold, the topological local circle action on $X$ lifts to a topological local circle on the orientable double branched cover of $X$ (see Proposition~\ref{T:DBL_BRANCHED_COVER} below). Then we prove that $X$ admits an invariant (orbifold) Riemannian metric $g$ (see Proposition~\ref{L:RIEMANNIAN_METRIC}). Finally, we show that the local circle action is isometric with respect to the distance function on $X$ induced by the (orbifold) Riemannian metric $g$ (see Proposition~\ref{L:METRIC}).
 
 
 Let $X$ be the topological space constructed out of the set of invariants given in \eqref{EQ:INVARIANTS} and assume that $X$ is not a topological manifold. As recalled in the preceding paragraph, there exists a closed orientable topological $3$-manifold $\tilde{X}$ with an involution $\iota:\tilde{X}\to \tilde{X}$ such that $X$ is homeomorphic to $\tilde{X}/\iota$ and $\iota$ is equivalent to a smooth involution on $\tilde{X}$. The manifold $\tilde{X}$ is the \emph{orientable  double branched cover} of $X$.


\begin{prop}
\label{T:DBL_BRANCHED_COVER}
Let $X$ be the topological space with a topological, effective local circle action constructed out of the set of invariants given in \eqref{EQ:INVARIANTS}. If $X$ is not a manifold, then the topological local circle action on $X$ lifts to an effective local circle action on the orientable double branched cover $\tilde{X}$.
\end{prop}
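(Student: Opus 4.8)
The plan is to exploit the explicit block decomposition of $X$ together with the fact that, by construction, $X = \tilde X/\iota$ for a closed orientable topological $3$-manifold $\tilde X$ carrying a smooth involution $\iota$ whose fixed-point set is exactly the preimage of the $SF$-fibers. Since $X$ is not a manifold, its non-manifold points are precisely the $SF$-fibers, and each of these sits inside a simple or twisted $SF$-block. The strategy is to lift the local circle action block by block: on each building block $W$ of $X$ that contains no $SF$-fiber (the $E$-, $F$-, $SE$-, and $R$-blocks, and the principal parts everywhere), the branched cover is an honest double cover, so the topological $S^1$-action on $W$ lifts to $\tilde X$ over that block; on a block $W$ containing $SF$-fibers, $W$ is by definition an equivariant connected sum involving copies of $\Susp(\mathbb{R}P^2)$, whose double branched cover is $S^3$ (or a connected sum of $S^3$'s, i.e.\ $S^3$) with the standard $S^1$-action, and the equivariant connected sum operation lifts compatibly since the balls along which the sums are performed consist only of topologically regular points.

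First I would treat the $SF$-blocks. Recall that a simple $SF$-block is $R_k \setminus (D^2\times S^1)$ and a twisted $SF$-block is $V\# R_k$ for a twisted $F$-block $V$, where $R_k$ is the equivariant connected sum of $k$ copies of $\Susp(\mathbb{R}P^2)$. The orientable double branched cover of $\Susp(\mathbb{R}P^2)$ is $S^3$, and the suspended circle action on $\Susp(\mathbb{R}P^2)$ lifts to the standard (weighted) circle action on $S^3$; iterating, the double branched cover of $R_k$ is again $S^3$ with a linear circle action, and the branching locus is the $k$ fixed arcs. Because the equivariant connected sums defining $R_k$ (and those attaching $R_k$ to $M$, $V$, etc.) are performed along invariant $3$-balls consisting only of topologically regular points with trivial or $SO(2)$ isotropy, the involution is trivial over those balls, and hence the connected-sum construction, the removal of the solid torus $D^2\times S^1$, and the circle actions all lift. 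This produces, over each $SF$-block $W$, a double cover $\widetilde W \to W$ together with a lifted topological $S^1$-action; over the complementary (manifold) part of $X$ the cover is trivial (two disjoint copies) and the action lifts componentwise.

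Second, I would glue. The branched cover $\tilde X$ is assembled from the pieces $\widetilde W$ and the trivially-covered manifold blocks along the tori (and Klein bottles) separating the blocks; over each such separating surface the circle action is the standard free $O(2)$- or $SO(2)$-action on $S^1$-bundles, so the lifted actions on adjacent pieces agree after composing with deck transformations where needed, and one obtains a global effective local $S^1$-action on $\tilde X$. Here "local circle action" means precisely a decomposition into invariant tubular neighborhoods each with an $S^1$-action whose orbits are the fibers, so it suffices to check the lift on each block and on the overlaps, which is exactly how $X$ itself was built. Effectiveness of the lifted action follows because the covering $\tilde X \to X$ is $2$-to-$1$ away from a codimension-$2$ set and the action downstairs is effective; an ineffective kernel upstairs would be a finite cyclic subgroup of $S^1$ acting trivially on a dense open set, forcing triviality.

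The main obstacle is the compatibility of the lifts across the gluings — ensuring that the piecewise-defined lifted circle actions (each well-defined only up to the deck involution on its piece) can be chosen coherently to assemble into a genuine global action on $\tilde X$, rather than merely a local action that fails to close up. This is handled by observing that the separating surfaces carry only regular points, over which the cover splits canonically into two sheets, so a consistent choice of sheet labelling on the $1$-skeleton of the block decomposition propagates to a global choice; the absence of $B(pt)$-type blocks and the explicit normal forms of the $SF$-blocks (via \cite[Corollary 4.5]{NZ}) guarantee there is no monodromy obstruction. The remaining verifications — that the lifted neighborhoods are still tubular, that the lifted orbits are simple closed curves or points, and that the action is isometric once $\tilde X$ is given the lifted Alexandrov metric — are routine once the topological lift is in place.
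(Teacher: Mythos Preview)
Your block-by-block strategy is reasonable in spirit, but it contains a genuine error that breaks the gluing step. You assert that ``over the complementary (manifold) part of $X$ the cover is trivial (two disjoint copies) and the action lifts componentwise,'' and later that ``the separating surfaces carry only regular points, over which the cover splits canonically into two sheets, so a consistent choice of sheet labelling \ldots propagates to a global choice.'' This is false in general: the restriction of the orientable double branched cover $\tilde X\to X$ to the manifold part $X_0$ is the \emph{orientable double cover} of $X_0$, and $X_0$ is typically non-orientable. Any $SE$-block, any twisted $F$-block, and any $R$-bundle with $\varepsilon\in\{o_2,n,n_1,n_2,n_3,n_4\}$ already makes $X_0$ non-orientable, so $\tilde X_0\to X_0$ is a connected non-trivial $2$-fold cover. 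Over a Klein-bottle boundary component the preimage is a single torus, not two sheets, and there is no ``sheet labelling'' to propagate. Consequently, both the componentwise lifting on the manifold blocks and the monodromy-free gluing argument collapse.

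The paper's proof avoids this by working fiber-by-fiber rather than block-by-block and invoking a single general tool: for each fiber $\gamma$ with invariant tubular neighborhood $V_\gamma$, Bredon's lifting theorem \cite[Theorem~9.1]{Br} produces a $2$-fold cover $\xi:S^1\to S^1$ and a \emph{unique} effective lifted $S^1$-action on $\pi^{-1}(V_\gamma)$, regardless of whether that preimage is connected or not. The uniqueness clause is exactly what forces the lifts on overlapping $\pi^{-1}(V_\alpha)\cap\pi^{-1}(V_\beta)$ to agree, so no orientability hypothesis or sheet-tracking is needed. Your approach could be repaired by replacing the ``trivial cover / componentwise lift'' claim with an appeal to the same lifting theorem on each block, but at that point you are essentially reproducing the paper's argument with larger open sets, and the block-level case analysis of the $SF$-pieces becomes unnecessary overhead.
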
 


\begin{proof}
Let $\Gamma= \{\gamma\}_{\gamma\in\Lambda}$  be the decomposition of $X$ into fibers of the local circle action. We let $\pi: \tilde{X}\to X$ be the canonical projection. Let $X_0$ be the space resulting from taking out small conical neighborhoods of each topologically singular point of $X$ and let $\tilde{X}_0$ be the orientable double cover of $X_0$.
For each $\gamma\in \Gamma$ we consider $V_{\gamma}$ a small tubular neighborhood of $\gamma$. Then we have an effective $S^1$-action
\[
\mu: S^1\times V_{\gamma}\to V_{\gamma}.
\]	
Consider $\pi|_{\pi^{-1}(V_{\gamma})}:\pi^{-1}(V_{\gamma})\to V_{\gamma}$, which is a $2$-sheeted covering map. By \cite[Theorem 9.1]{Br}, we have a $2$-sheeted covering $\xi:G'=S^1 \to S^1$ and a \emph{unique} effective $G'$-action $\tilde{\mu}: G'\times \pi^{-1}(V_{\gamma})\to \pi^{-1}(V_{\gamma})$ such that the following diagram commutes:
\[
\xymatrix{
G'\times \pi^{-1}(V_{\gamma}) \ar[d]_{(\xi,\pi)} \ar[r]^{ \ \ \tilde{\mu}} &  \pi^{-1}(V_{\gamma}) \ar[d]^{\pi}\\
S^1\times V_{\gamma} \ar[r]_{\mu} &  V_{\gamma} }
\]

By applying this procedure to each $\gamma\in\Gamma$, we obtain an effective circle action on each element of the open covering $\mathcal{V}=\{\pi^{-1}(V_{\gamma})\}_{\gamma\in\Lambda}$ of $\tilde{X}_0$. 


\begin{lem}
\label{L:LIFTING}
The covering $\mathcal{V}$ determines a unique, effective, local $S^1$-action $\tilde{\Gamma}$ on $\tilde{X}$ covering $\Gamma$.
\end{lem}


\begin{proof}

Let $\alpha,\beta\in \Gamma$ be fibers such that $\pi^{-1}(V_{\alpha})\cap\pi^{-1}(V_{\beta})\neq \emptyset$. Therefore, $\pi(\pi^{-1}(V_{\alpha})\cap\pi^{-1}(V_{\beta}))\subseteq \pi(\pi^{-1}(V_{\alpha}))\cap\pi(\pi^{-1}(V_{\beta})) =V_{\alpha}\cap V_{\beta}$. Thus, $V_{\alpha}\cap V_{\beta}\neq\emptyset$.

Observe now that $V_{\alpha}\cap V_{\beta}$ is ``invariant" under the local $S^1$-action on $X$, i.e. for each $x\in V_{\alpha}\cap V_{\beta}$ there is exactly one fiber $\gamma_x\in \Gamma$ containing $x$ and, since $V_{\alpha}$ and $V_{\beta}$ are composed of fibers of $\Gamma$, $\gamma_x$ must be contained in both $V_{\alpha}$ and $V_{\beta}$. Therefore $V_{\alpha}\cap V_{\beta}$ is composed entirely of elements of $\Gamma$. 

Now, we consider $\pi^{-1}(V_{\alpha}\cap V_{\beta})=\pi^{-1}(V_{\alpha})\cap \pi^{-1}(V_{\beta})$. The lift of the local $S^1$-action on $V_{\alpha}\cap V_{\beta}$ to $\pi^{-1}(V_{\alpha}\cap V_{\beta})$ coincides with either the restriction of the one on $\pi^{-1}(V_{\alpha})$ or the one on $\pi^{-1}(V_{\beta})$ (by the uniqueness part of \cite[Theorem 9.1]{Br}). 

We define $\tilde{\Gamma}_0$ to be the set of fibers of the lifted local $S^1$-actions on each element of $\mathcal{V}$. By our previous observations $\tilde{\Gamma}_0$ is an effective, local $S^1$-action on $\tilde{X}_0$. By gluing (via fiber-preserving homeomorphisms) a collection of $3$-balls equipped with orthogonal $S^1$-actions with only one fixed point to $\tilde{X}_0$, we obtain an effective, local $S^1$-action $\tilde{\Gamma}$ on $\tilde{X}$.
\end{proof}

We now observe that the involution $\iota:\tilde{X}\to \tilde{X}$ commutes with $\tilde{\Gamma}$. As in the case of global $S^1$-actions, for each $\pi^{-1}(V_{\gamma})\in \mathcal{V}$,  the kernel of the two-sheeted cover $\xi:S^1\to S^1$ and the group of deck transformations of $\pi|_{\pi^{-1}(V_{\gamma})}$ coincide and are isomorphic to $\mathbb{Z}_2$ (cf. \cite[Section 4.1]{NZ}).
Therefore, $\iota$ commutes with $\tilde{\mu}$. Since this is the case for each element of $\mathcal{V}$, the involution $\iota$ commutes with $\tilde{\Gamma}$. 
\end{proof}



\begin{prop}
\label{L:RIEMANNIAN_METRIC}
There exists a Riemannian (orbifold) metric $g$ on $X$ such that the given topological local circle action is isometric with respect to this metric.
\end{prop}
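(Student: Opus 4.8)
The plan is to build the metric block by block, following the same decomposition of $X$ that was used in its construction, and then patch the pieces together with an invariant partition of unity. First I would treat the manifold case: if $X$ is a topological $3$-manifold, then by Theorem~\ref{THM:ORF} it carries a smooth structure in which the local $S^1$-action is smooth (this is part of the Orlik--Raymond--Fintushel classification, since their building blocks are smooth and the sewings are smooth). On each building block --- $E$-blocks $D^2\times_{\mathbb{Z}_\mu}\sphere^1$, simple and twisted $F$-blocks, simple and twisted $SE$-blocks --- one has an explicit smooth local circle action, and one equips each block with a metric invariant under this action by averaging an arbitrary metric over the (compact) local $S^1$; on a twisted block one first averages over $\sphere^1$ on the double cover and descends. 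The overlaps of adjacent blocks are $R$-fiber neighborhoods where the action is a genuine smooth $S^1$-action, so one can glue the block metrics using a partition of unity subordinate to the block cover whose members are constant along fibers (equivalently, pulled back from $X^*$); averaging this partition of unity over the local circle actions makes each summand invariant, and the resulting global metric $g$ is a smooth Riemannian metric for which the local $S^1$-action is isometric.

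Next I would handle the non-manifold case. By Proposition~\ref{T:DBL_BRANCHED_COVER}, the topological local circle action on $X$ lifts to an effective local circle action $\tilde\Gamma$ on the orientable double branched cover $\tilde X$, which is a closed topological $3$-manifold; moreover the branched-covering involution $\iota$ commutes with $\tilde\Gamma$. Away from the branch locus, $\tilde X\setminus \Fix(\iota)\to X\setminus SF$ is a genuine $2$-sheeted cover, so $\tilde X$ itself is a closed $3$-manifold with a local $S^1$-action, and by the manifold case just treated, and using that $\iota$ is equivalent to a smooth involution (Section~\ref{S:PRELIM}), $\tilde X$ carries a smooth structure in which both $\tilde\Gamma$ and $\iota$ are smooth. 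Average an arbitrary Riemannian metric on $\tilde X$ over the finite group $\langle\iota\rangle$ and over the local circle actions (block by block, as above) to obtain a smooth Riemannian metric $\tilde g$ on $\tilde X$ that is invariant under both $\iota$ and $\tilde\Gamma$. Since $\iota$ acts on $\tilde X$ with $\Fix(\iota)$ a disjoint union of embedded circles (the preimages of the cone points), and since $\tilde g$ is $\iota$-invariant, the quotient $X=\tilde X/\iota$ inherits a Riemannian orbifold metric $g$ --- in the language of the excerpt, $X$ is homeomorphic to a smooth orbifold and $g$ is an orbifold Riemannian metric --- for which the induced local circle action on $X$ is isometric, because $\tilde\Gamma$ descends to the given local $S^1$-action on $X$ and $\tilde g$ descends to $g$.

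The main obstacle I anticipate is purely organizational rather than deep: one must check that the invariant metrics chosen on the individual blocks can be made to agree on overlaps, or at least can be glued by an invariant partition of unity without destroying invariance. This is handled by noting that block overlaps consist of $R$-fiber tubes on which the action is a free (or at worst $O(2)$-structure-group) circle action, so there is no competition between the ``twisted'' and ``untwisted'' local trivializations there --- the ambiguity of orientation of fibers on a twisted block lives in the interior of that block, not on its $R$-fiber boundary --- and a fiberwise-constant partition of unity subordinate to the block cover does the gluing. A secondary point to verify carefully is that on twisted $F$- and $SF$-blocks the metric obtained by averaging over $\sphere^1$ on the double cover actually descends to an invariant metric for the local action downstairs; this follows because the deck group of that double cover coincides with the kernel of the two-sheeted homomorphism $\xi\colon S^1\to S^1$ and hence commutes with the relevant $S^1$-action, exactly as recorded at the end of the proof of Proposition~\ref{T:DBL_BRANCHED_COVER}.
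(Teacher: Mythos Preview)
Your approach is correct and close in spirit to the paper's, but the organization differs in two noticeable ways. First, you work with the block decomposition (the $E$-, $F$-, $SE$-, $SF$-blocks together with the $R$-fiber bundle $X_0$), whereas the paper simply covers $X$ by the tubular neighborhoods $V_\gamma$ of the individual fibers and invokes an equivariant partition of unity subordinate to $\{V_\gamma\}$; their cover is finer and sidesteps the discussion of how blocks overlap. Second, and more substantively, in the non-manifold case you construct a global $\iota$- and $\tilde\Gamma$-invariant metric on the double branched cover $\tilde X$ and then descend once, while the paper proceeds locally: for each $V_\gamma$ it lifts the circle action to $\pi^{-1}(V_\gamma)$, builds an invariant metric there, descends to an orbifold metric $g_\gamma$ on $V_\gamma$, and only then glues on $X$ via an orbifold partition of unity. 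Your global-descent route is arguably cleaner, since the gluing is done in the smooth manifold $\tilde X$ and the only extra requirement is that the partition of unity be $\iota$-invariant, which follows by averaging.

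One factual slip to correct: you write that $\Fix(\iota)$ is a disjoint union of embedded circles. In fact $\Fix(\iota)$ is a finite set of isolated points --- the preimage of each topologically singular point of $X$ is a single point of $\tilde X$, since the local model is the antipodal involution on a $3$-ball covering $K(\Real P^2)$. This does not damage your argument (the quotient is still a smooth orbifold and the $\iota$-invariant metric still descends), but the description should be fixed.
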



\begin{proof}
We distinguish two cases, depending on whether or not $X$ is a topological manifold. 

Suppose first that $X$ is a topological manifold. Since $X$ is three-dimen\-sion\-al, it admits a (unique) smooth structure. 
Let $\Gamma=\{\gamma\}_{\gamma\in \Lambda}$ be the decomposition of $X$ into the fibers of the local circle action. For each $\gamma\in \Gamma$ we consider  a small tubular neighborhood $V_{\gamma}$ of $\gamma$ so that we have an effective, topological  circle action on $V_{\gamma}$. By the work of Orlik and Raymond \cite{OR,R}, the circle action on $V_\gamma$ is equivariantly homeomorphic to a smooth circle action. By \cite[Theorem 3.65]{AlexBett}, there exists a Riemannian metric $g_{\gamma}$ on each $V_\gamma$ such that the circle action is isometric. We now consider an equivariant smooth partition of unity $\{f_{\gamma}:V_{\gamma}\to [0,\infty)\}_{\gamma\in\Lambda}$ subordinated to the cover $\{V_{\gamma}\}_{\gamma\in\Lambda}$ of $X$. Then, we define a Riemannian metric $g$ on $X$ by $\sum_{\gamma\in\Lambda}f_{\gamma}g_{\gamma}$. The metric $g$ is invariant with respect to the local circle action since each $g_{\gamma}$ is invariant with respect to the circle action on each $V_{\gamma}$.  

We now consider the case where $X$ is not a topological manifold. As noted before the proposition, $X$ is
homeomorphic to a smooth orbifold,
 since it is a quotient of its double branched cover by an involution that is equivalent to a smooth involution (see \cite{GG1}). 

For each $\gamma\in \Gamma$ we consider  a small tubular neighborhood $V_{\gamma}$ of $\gamma$ so that we have an effective $S^1$-action
\[
\mu_{\gamma}: S^1\times V_{\gamma}\to V_{\gamma}.
\]	
As in the proof of Proposition~\ref{T:DBL_BRANCHED_COVER}, we consider the restriction $\pi^{-1}(V_{\gamma}) \to V_{\gamma}$ of the double branched covering to $V_{\gamma}$. Then, by \cite[Theorem 9.1]{Br}, we can lift the $S^1$-action on $V_{\gamma}$ to $\pi^{-1}(V_{\gamma})$. By an analogous procedure to that of \cite[Section 4.1]{NZ}, $\pi^{-1}(V_{\gamma})$ admits a Riemannian metric invariant with respect to the lifted circle action. We equip $V_{\gamma}$ with the Riemannian orbifold metric $g_{\gamma}$ induced from $\pi^{-1}(V_\gamma)$. 
As in the proof of 
Proposition~\ref{T:DBL_BRANCHED_COVER},
the involution commutes with the lifted action, and therefore, this metric is invariant with respect to the circle action on $V_{\gamma}$. We let $\mathcal{V}$ be the open cover of $X$ given by $\{V_{\gamma}\}_{\gamma\in\Lambda}$.  

We now consider a smooth (in the sense of orbifolds) equivariant partition of unity $\{f_{\gamma}:V_{\gamma}\to [0,\infty)\}_{\gamma\in\Lambda}$ subordinated to the cover $\mathcal{V}$. Then, we define a Riemannian orbifold metric $g$ on $X$ by $\sum_{\Lambda}f_{\gamma}g_{\gamma}$. The metric $g$ is invariant with respect to the local circle action since each $g_{\gamma}$ is invariant with respect to the circle action on each $V_{\gamma}$.
\end{proof}

We now show that the local circle action is isometric with respect to the distance function induced by the Riemannian metric constructed in Proposition~\ref{L:RIEMANNIAN_METRIC}. The proof is rather technical, but it essentially follows from the fact that shortest geodesics can be covered with finitely many invariant neighborhoods of the local circle action. We first make the following observation.


\begin{lem} 
\label{L:ACTIONS_S1_ON_S1}
Any two continuous free actions $\mu_1, \mu_2$ of $S^1$ on itself are equivalent and therefore any one of them is equivalent to the action given by complex multiplication.  
\end{lem}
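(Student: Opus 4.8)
The plan is to reduce the statement to a standard fact about continuous maps between circles. First I would recall that a continuous free action $\mu$ of $S^1$ on $S^1$ is the same as a continuous homomorphism $S^1 \to \mathrm{Homeo}(S^1)$ whose image acts freely, or, more concretely, it gives for each $s \in S^1$ a homeomorphism $\mu_s \colon S^1 \to S^1$ with $\mu_s \circ \mu_{s'} = \mu_{ss'}$ and $\mu_1 = \mathrm{id}$, and freeness means that $\mu_s$ has a fixed point only when $s = 1$. Fix a basepoint $x_0 \in S^1$ and consider the \emph{orbit map} $\phi \colon S^1 \to S^1$, $\phi(s) = \mu_s(x_0)$. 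Freeness of the action makes $\phi$ injective, and since $S^1$ is compact and $S^1$ is Hausdorff, $\phi$ is a homeomorphism onto its image; but the image is the orbit $S^1(x_0)$, which is open (slices) and closed (compact), hence all of $S^1$ by connectedness. So $\phi$ is a homeomorphism of $S^1$.

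Next I would observe that, after conjugating $\mu$ by the homeomorphism $\phi$ itself, i.e.\ replacing $\mu$ by the equivalent action $s \mapsto \phi^{-1} \circ \mu_s \circ \phi$, one may assume that the orbit map through $x_0$ is the identity, equivalently that $\mu_s(x_0) = x_0 \cdot s$ under the group structure of $S^1$ (here I am using that $\phi$ conjugates $\mu$ to the action whose orbit map is $\mathrm{id}$). At this point the cocycle $\mu_s(x) x^{-1}$ is a continuous function $S^1 \times S^1 \to S^1$ which equals $s$ whenever $x = x_0$; the homomorphism property forces $\mu_s(x) = x \cdot c(s)$ for a continuous homomorphism $c \colon S^1 \to S^1$, and evaluating at $x_0$ gives $c(s) = s$. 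Hence the conjugated action is precisely complex multiplication. Applying this to both $\mu_1$ and $\mu_2$ shows each is equivalent to complex multiplication, and since ``equivalent'' is an equivalence relation, $\mu_1$ and $\mu_2$ are equivalent to each other.

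A cleaner variant, which I would likely present instead to keep things short, is: let $\phi_i \colon S^1 \to S^1$ be the orbit map of $\mu_i$ through a chosen point, shown above to be a homeomorphism and to intertwine $\mu_i$ with multiplication in the sense $\phi_i(s \cdot x) = \mu_i(s, \phi_i(x))$ — wait, more precisely $\phi_i$ conjugates multiplication to $\mu_i$; then $\phi_2 \circ \phi_1^{-1}$ is a homeomorphism of $S^1$ conjugating $\mu_1$ to $\mu_2$, so the two actions are equivalent, and taking $\mu_2$ to be complex multiplication gives the last clause. I would also note that one does not even need to worry about orientation: the statement only claims equivalence (by a homeomorphism), not equivariant homeomorphism with the identity automorphism of $S^1$, so no weak/strong distinction arises.

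The main obstacle — really the only nontrivial point — is verifying carefully that the orbit map $\phi$ is a homeomorphism, i.e.\ that a continuous free circle action on the circle is transitive. Continuity and injectivity (from freeness) are immediate; surjectivity uses that the orbit is simultaneously open and closed. Openness of orbits is where one must invoke the structure of continuous actions: for actions of compact Lie groups this follows from the slice theorem, or one can argue directly that the stabilizer of $x_0$ is a closed subgroup of $S^1$, which is trivial by freeness, so $\phi$ factors through $S^1 / \{1\} = S^1$ as a continuous bijection of a compact space onto a Hausdorff space and is therefore a homeomorphism — this last route avoids openness altogether and is the one I would use.
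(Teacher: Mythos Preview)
Your ``cleaner variant'' is correct and is precisely the paper's proof in different language: the paper picks basepoints (phrased as a ``cross-section'' $s_i\colon\{pt\}\to S^1_i$), asserts that each point has a unique representation $\mu_i(g, s_i(pt))$ --- i.e., that the orbit map is a bijection --- and defines the equivalence by $\mu_1(g, s_1(pt)) \mapsto \mu_2(g, s_2(pt))$, which is exactly your $\phi_2\circ\phi_1^{-1}$. You are in fact more careful than the paper about why the orbit map is a homeomorphism.

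Two small points to tidy if you present this. First, the cocycle paragraph is both unnecessary and unfinished: once you conjugate by $\phi$ you can compute directly that $\phi^{-1}(\mu_s(\phi(t))) = \phi^{-1}(\mu_{st}(x_0)) = st$, so the conjugated action \emph{is} multiplication and no further argument is needed; the claim that ``the homomorphism property forces $\mu_s(x)=x\cdot c(s)$'' is never justified. Second, your ``last route'' does not actually bypass the openness issue: showing $\phi$ is a homeomorphism onto its image does not by itself give surjectivity, so you still need your open-and-closed argument (or one-dimensional invariance of domain) to conclude the orbit is all of $S^1$.
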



\begin{proof}
Let $S^1_1$ and $S^1_2$ denote the two copies of $S^1$ on which $\mu_1$ and $\mu_2$ act, respectively. Each of the orbit spaces $S^1_1/S^1$ and $S^1_2/S^1$ is a point so we can identify them. We will denote this point by $pt$. Since the orbit space is a point, for $i=1,2$, there exists a cross-section $s_i:pt\to S^1_i$ to $\mu_i$, defined by choosing any point in $S^1_i$ as the image of $pt$. Each of the points $x\in S^1_1$ has a unique representation of the form $x=\mu_1(g,s_1(pt))$ and, analogously, each $y\in S^1_2$ is uniquely expressed as $y=\mu_2(h,s_2(pt))$.  We define $\Psi:S^1_1 \to S^1_2$ by letting $\Psi(s_1(pt))=s_2(pt)$ and extending ``equivariantly", i.e.
\[
\Psi(x)=\Psi(\mu_1(g,s_1(pt)):= \mu_2(g,\Psi(s_1(pt)))= \mu_2(g,s_2(pt)))
\]
This function is clearly continuous, equivariant and the same is true for its inverse, which is constructed analogously. Therefore $\mu_1$ is equivalent to $\mu_2$.
\end{proof}


\begin{prop}
\label{L:METRIC}
The given topological local circle action on $X$ is isometric with respect to the distance induced by the Riemannian metric $g$ of Proposition \ref{L:RIEMANNIAN_METRIC}. Therefore, there exists an Alexandrov metric on $X$ such that the given topological local circle action is isometric.
\end{prop}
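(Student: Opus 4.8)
The plan is to show that the length metric $d_g$ induced by the invariant Riemannian (orbifold) metric $g$ is preserved by the local circle action, and then invoke the general fact that a length space which is locally an Alexandrov space is globally an Alexandrov space (with the same lower curvature bound). The key point is that although there is no global circle action to work with, the metric $g$ is assembled from metrics $g_\gamma$, each invariant under the local $S^1$-action on $V_\gamma$; hence $g$ itself is \emph{locally} invariant, and isometries of a length metric are detected by their effect on lengths of paths. So first I would fix a fiber $\gamma_0$, its tubular neighborhood $V_{\gamma_0}$ with the honest isometric $S^1$-action $\mu_{\gamma_0}$, and an element $z\in S^1$; I want to show the self-homeomorphism $\mu_{\gamma_0}(z,\cdot)$ of $V_{\gamma_0}$ extends (in an appropriate sense) to a $d_g$-isometry of $X$, or at least that $d_g(x,\mu_{\gamma_0}(z,x))=0$-type statements and distance-preservation along orbits hold. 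More precisely, the target is: for every $x,y$ in a common invariant neighborhood and every $z\in S^1$, $d_g(zx,zy)=d_g(x,y)$, and this is enough once we argue that the group action, though only locally defined, still acts by global isometries because the local pieces are compatible on overlaps (Lemma~\ref{L:LIFTING}, and the compatibility used in its proof).

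The heart of the argument is the claim, flagged in the paragraph before the statement, that any minimizing $g$-geodesic $\sigma$ from $x$ to $y$ can be covered by finitely many of the invariant tubular neighborhoods $V_\gamma$: by compactness of $\sigma([0,1])$ and of $X$, choose $0=t_0<t_1<\dots<t_N=1$ so that $\sigma([t_{j-1},t_j])\subseteq V_{\gamma_j}$ for some fiber $\gamma_j$. Now, to compare $d_g(x,y)$ with $d_g(x',y')$ for nearby images under local $S^1$-translations, I would transport $\sigma$ piece by piece: on $V_{\gamma_1}$ apply $\mu_{\gamma_1}(z_1,\cdot)$ (an honest isometry of $(V_{\gamma_1},g_{\gamma_1})$, hence of $(V_{\gamma_1}, g)$ since on that neighborhood $g=g_{\gamma_1}$ is false in general — it is a convex combination — so here one must be careful: $g$ restricted to $V_{\gamma_1}$ is $\sum_\gamma f_\gamma g_\gamma$, and each $g_\gamma$ appearing is invariant under the $S^1$-action on $V_\gamma$, but these are \emph{different} actions). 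This is exactly where Lemma~\ref{L:ACTIONS_S1_ON_S1} and the uniqueness clause of \cite[Theorem~9.1]{Br} enter: on any overlap $V_\alpha\cap V_\beta$ the two local circle actions restrict to the \emph{same} decomposition into fibers and, being free actions of $S^1$ on the fiber, agree up to reparametrization; so one can choose the partition of unity and the local trivializations so that on each $V_\gamma$ all the $g_{\gamma'}$ with $V_{\gamma'}\cap V_\gamma\neq\emptyset$ are invariant under one common local $S^1$-action. With that normalization, $g$ is genuinely invariant on each $V_\gamma$ under the local action, each $\mu_{\gamma_j}(z_j,\cdot)$ is a $g$-isometry of $V_{\gamma_j}$, and we can reconstitute a broken path of the same $g$-length joining the translated endpoints, first choosing $z_1$ to move the $t_1$-endpoint correctly, then $z_2$ to agree with $z_1$'s effect at $\sigma(t_1)$ and move on, etc. Since the decomposition into fibers is globally well-defined (it is $\Gamma$), the "move the orbit of $x$ to the orbit of $x'$" operation is globally meaningful even though the group element is only local, and iterating across the $N$ pieces shows $d_g(x',y')\le\mathrm{length}_g(\sigma)=d_g(x,y)$; symmetry gives equality.

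Concretely, the steps in order: (1) set up $d_g$ as the length metric of the invariant (orbifold) metric $g$ from Proposition~\ref{L:RIEMANNIAN_METRIC}, and record that by construction $d_g$ restricted to each $V_\gamma$ is realized by $g_\gamma$ up to the partition-of-unity averaging; (2) normalize the local actions and the partition of unity using Lemma~\ref{L:ACTIONS_S1_ON_S1} and the uniqueness in \cite[Theorem~9.1]{Br} so that on each $V_\gamma$ the whole metric $g$ is invariant under a single local $S^1$-action compatible with $\Gamma$; (3) prove the finite-cover lemma for minimizing geodesics via compactness; (4) run the piecewise-transport argument to get $d_g$-invariance of the local action, i.e. that moving any point along its fiber by a local group element does not change distances; (5) conclude that $d_g$ makes the local $S^1$-action isometric, and invoke \cite[Corollary in p.~16]{BGP} (local-to-global for curvature bounds, already used in Section~\ref{S:Orbit_types} for $X^*$) to upgrade the locally-Alexandrov metric $d_g$ to an Alexandrov metric on $X$ with the expected lower curvature bound, finishing the proof. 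The main obstacle — and the reason the authors call the proof "rather technical" — is step (2)/(4): reconciling the local group elements into a coherent transport along a geodesic that crosses many charts, since there is no ambient group; the finite-cover lemma is what makes this bookkeeping terminate, and the compatibility of overlapping local actions on the common fiber is what makes each transport step an honest $g$-isometry rather than merely a homeomorphism.
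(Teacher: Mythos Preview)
Your overall strategy matches the paper's: cover a minimizing geodesic by finitely many invariant neighborhoods $V_{\gamma_j}$ and transport it piece by piece using the local $S^1$-actions, showing the transported curve has the same length. Two points, however, diverge from the paper. Your step~(2) --- normalizing the local actions so that $g$ is genuinely invariant on each $V_\gamma$ --- is work the paper regards as already done in Proposition~\ref{L:RIEMANNIAN_METRIC}; the proof here simply uses that each $\mu_\gamma$ is a $g$-isometry of $V_\gamma$. Moreover, the paper's use of Lemma~\ref{L:ACTIONS_S1_ON_S1} is not for this normalization but to ensure that, after the chain of transports, the endpoint of the transported curve is actually $\mu(\theta_0,y)$ rather than some other point on the fiber through $y$.

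The genuine gap is that you omit the stratum-by-stratum case analysis that occupies most of the paper's proof. Your transport procedure works cleanly only when the geodesic $\eta$ lies entirely in the principal stratum $R$, where the local actions are free on every fiber and the matching $\theta_{j-1}\mapsto\theta_j$ at each overlap is well-defined. When $\eta$ meets $F$-, $E$-, or $SE$-fibers, the action on those fibers is not free, the matching becomes ambiguous, and you cannot conclude that the transported curve ends at the correct translate of $y$. The paper handles this by: perturbing $\eta$ off the codimension-two strata $F\cup E$ at cost $\varepsilon$ in length; arranging (by a further perturbation) that crossings of the codimension-one $SE$-stratum are finitely many, transverse, and each contained in the interior of a single chart $V_{j_i}$, so that the transport across an $SE$-crossing uses only one action and no matching is needed there; and finally passing to the limit $\varepsilon\to 0$. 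Topologically singular points are treated by density of the regular set. Without this analysis your step~(4) does not go through as stated. (A minor point: your step~(5) is more elaborate than necessary --- the paper simply observes that a compact Riemannian orbifold has sectional curvature bounded below, hence $(X,d_g)$ is Alexandrov directly, without invoking a local-to-global principle.)
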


\begin{proof}

We will show that in both the manifold and non-manifold cases the distance function $d$ induced on $X$ by $g$ 
is an Alexandrov metric with respect to which the given topological local circle action is isometric. 
Observe first that, since  $d$ is induced by a Riemannian (orbifold) metric and $X$ is compact, $d$ has curvature bounded below in the comparison sense. Hence $(X,d)$ is an Alexandrov space. Now we show that the local action is isometric with respect to $d$. The proof works in both cases with no modifications. 

Let $\gamma$ be a fiber of the local circle action on $X$ and $V_{\gamma}$ a small tubular neighborhood of it. Now we let $x,y\in V_\gamma$ and $\eta:[0,1]\to X$ be a geodesic joining $x$ and $y$. Suppose that $\eta([0,1])$ is fully contained in $V_{\gamma}$. Then, since the action of $S^1$ on $V_{\gamma}$ is isometric, for any $\theta\in S^1$, the curve $\theta\eta:[0,1]\to V_{\gamma}$ given by $(\theta\eta)(t)=\theta\eta(t)$ is a geodesic between $\theta x$ and $\theta y$,  and this is analogous to the global circle action case. Therefore $d(x,y)=d(\theta x,\theta y)$. 

We will now examine the case where $\eta([0,1])$ is not contained in $V_{\gamma}$. First we will assume that $x$ and $y$ are topologically regular points. Since the stratum of topologically regular points of $X$ is convex (see \cite[Sec.~1.8]{Per3}),  $\eta([0,1])$ will be contained in this stratum. In general, $\eta([0,1])\cap F$, $\eta([0,1])\cap E$ and $\eta([0,1])\cap SE$ might not be empty. For this reason, we divide the following analysis into several cases.\\

We first consider the case where
 $\eta([0,1])\subset R$, where $R\subset X$ is the stratum of  $R$-fibers of the local circle action on $X$. Since $\eta([0,1])$ is compact we can cover it with a finite number of open sets of the form $V_{\gamma_{\eta(t)}}$, where $\gamma_{\eta(t)}$ is the unique fiber of the local circle action going through $\eta(t)$. 
We will denote them by $V_i$, with $i=0,1,\ldots, n$. We will also denote the circle action on $V_i$ by $\mu_i$. We can take a partition $0=t_0<t_1<\cdots<t_{n+1}=1$ of $[0,1]$ such that $\eta([t_i,t_{i+1}])\subset V_i$. We may assume that $V_0$ and $V_n$ are contained in $V_\gamma$, so that the actions $\mu_0$ and $\mu_n$ are given by the action $\mu$ on $V_\gamma$.

Now, for each $\theta_0\in S^1$, we will construct a curve $\tilde{\eta}$ joining $\mu_0(\theta_0,x)=\mu(\theta_0,x)$ and 
$\mu_n(\theta_0,y) =\mu(\theta_0,y)$ with the same length as $\eta$. The curve $\tilde{\eta}$ is defined piecewise in the following way. We consider the curve
\[
\tilde{\eta}_0:[t_0,t_1]\to V_0
\]
\[
t\mapsto \mu_0(\theta_0,\eta|_{[t_0,t_1]}(t)).
\]
Now, note that $\eta(t_1)\in V_0\cap V_1$. Since the fibers of $\mu_0$ coincide with the fibers of $\mu_1$  on $V_0\cap V_1$, then $\eta(t_1)$ and $\mu_0(\theta_0,\eta(t_1))$ are in the same fiber with respect to $\mu_1$. Therefore, there exists $\theta_1\in S^1$ such that $\mu_1(\theta_1,\eta(t_1))=\mu_0(\theta_0,\eta(t_1))$. We define
\[
\tilde{\eta}_1:[t_1,t_2]\to V_1
\]
\[
t\mapsto \mu_1(\theta_1,\eta|_{[t_1,t_2]}(t)).
\]
We continue this process inductively, noting that for each $i\in\{1,\ldots, n\}$, $t_i\in V_{i-1}\cap V_i$ and there exists $\theta_i\in S^1$ such that $\mu_{i}(\theta_i,\eta(t_i))=\mu_{i-1}(\theta_{i-1},\eta(t_i))$. We then let the curve $\tilde{\eta}_i$ be given by
\[
\tilde{\eta}_i:[t_i,t_{i+1}]\to V_i
\]
\[
t\mapsto \mu_i(\theta_i,\eta|_{[t_i,t_{i+1}]}(t)).
\]
As a consequence of Lemma~\ref{L:ACTIONS_S1_ON_S1}, we have that $\tilde{\eta}_n(t_n)=\mu_n(\theta_0,y) = \mu(\theta_0,y)$. We define $\tilde{\eta}:[0,1]\to R$, given by
\[ 
\tilde{\eta}(t):= \tilde{\eta}_i(t) \ \mbox{ if } t\in [t_i,t_{i+1}],
\]
which is a  piecewise smooth curve with the same length as $\eta$. Therefore $d(x,y)\geq d(\mu_0(\theta_0,x), \mu_0(\theta_0,y))$. By an analogous procedure applied to a geodesic between $\mu_0(\theta_0,x)$ and $\mu_0(\theta_0,y)$, we obtain the opposite inequality which yields that $d(x,y)= d(\mu_0(\theta_0,x), \mu_0(\theta_0,y))$.\\

Now we consider the case in which $\eta([0,1])\cap (F\cup SF\cup E\cup SE)\neq \emptyset$. First, we make the following observation. In the case that $\eta(0)$ or $\eta(1)$ are not in $R$, for any $\varepsilon>0$ we can slightly perturb $\eta([0,1])$ inside $V_{\gamma}$ to obtain a curve $\bar{\eta}:[0,1]\to X$ which coincides with $\eta([0,1])$ outside $V_{\gamma}$ such that 
\[
\left\lvert\,\mathrm{Length}(\bar{\eta}) - \mathrm{Length}(\eta)\,\right\rvert \leq  \varepsilon
\]
and with its endpoints contained in $R$. Using the previous case in which we assumed that $\eta([0,1])\subset R$, we will show that for any such curve $\bar{\eta}$ and any $\theta\in S^1$ there exists a curve of the same length joining $\mu(\theta, \bar{\eta}(0))$ and $\mu(\theta, \bar{\eta}(1))$. Assuming this, we choose a sequence $\varepsilon_i\to 0$ and consider the corresponding sequence of curves $\bar{\eta}_i$. Then, up to a subsequence, the induced curves joining  $\mu(\theta, \bar{\eta}_i(0))$ and $\mu(\theta, \bar{\eta}_i(1))$ converge to a curve joining $\mu(\theta, \eta(0))$ and $\mu(\theta, \eta(1))$ having the same length as $\eta$. Therefore, in the following we assume that the endpoints of $\eta$ are contained in $R$. 

Since the $F$- and $E$-components are codimension $2$ submanifolds of $X$ and are isolated, for each $\varepsilon>0$, we may perturb $\eta$ to obtain a curve 
$\hat{\eta}:[0,1]\to X$ joining $x$ and $y$ contained in the topologically regular stratum and satisfying that
\begin{itemize}
\item[(i)] $\hat{\eta}([0,1])\cap (F\cup E)=\emptyset$,
\item[(ii)] $\hat{\eta}([0,1])\cap SE = \eta([0,1])\cap SE$, and
\item[(iii)] $ \left\lvert\, \mathrm{Length}(\hat{\eta}) - \mathrm{Length}(\eta)\,\right\rvert \leq \varepsilon$.
\end{itemize}

Let us now consider the case in which $\hat{\eta}$ intersects the $SE$-stratum. 
Since there are finitely many connected components of the $SE$-stratum, and each component is a $2$-dimensional manifold, we can slightly perturb the curve $\hat{\eta}$ to obtain a curve with the same endpoints which intersects each component of the $SE$-stratum transversally at at most a finite number of points. Moreover, we may assume that the length of the perturbed curve is arbitrarily close to the length of $\hat{\eta}$.  Abusing notation, we will also call this new curve $\hat{\eta}$. Let $\{\hat{\eta}(s_i)\}= \hat{\eta}([0,1])\cap SE$. We can further assume that on a sufficiently small interval $[a_i,b_i]\subset [0,1]$ with $a_i<s_i<b_i$, the curve $\hat{\eta}$ is a geodesic. 

Therefore, 
for any $\varepsilon>0$, there exists a curve $\tilde{\eta}:[0,1]\to X$ such that
\begin{itemize}
\item[(i)] $\tilde{\eta}([0,1])\setminus \{\tilde{\eta}(s_i)\}_{i=1}^m \subset R$,
\item[(ii)] $\tilde{\eta}$ only has a finite number of points $\{\tilde{\eta}(s_i)\}_{i=1}^m$ in $SE$ and $\tilde{\eta}$ is a geodesic on sufficiently small intervals $[a_i,b_i]$ with $a_i<s_i<b_i$, and
\item[(iii)] $ \left\lvert\,\mathrm{Length}(\tilde{\eta}) - \mathrm{Length}(\eta)\,\right\rvert \leq \varepsilon$.
\end{itemize}

Let us fix $\varepsilon>0$ and $\theta_0\in S^1$. We cover $\tilde{\eta}$ with a finite number of invariant neighborhoods $V_j$ of the local action in such a way that for each $i$,  $\tilde{\eta}([a_i,b_i])$ is fully contained in exactly one of the neighborhoods $V_j$, which we denote by $V_{j_i}$. Let $\mu_j$ be the isometric circle action on $V_j$. Let us consider $\tilde{\eta}([0,a_1])$, which is covered by $\{V_j\}_{j=0}^{j_1}$. By the same procedure as in the case in which $\eta$ was contained in $R$, we obtain a curve $\rho_1:[0,a_1]\to X$ joining $\mu_0(\theta_0,x)$ and $\mu_{j_1}(\theta_{j_1}, \tilde{\eta}(a_1))$ for some $\theta_{j_1}\in S^1$ such that 
\[
\mathrm{Length}(\tilde{\eta}\,|\,{[0,a_1]})= \mathrm{Length}(\rho_1\,|\,{[0,a_1]}).
\]

Since $\tilde{\eta}(b_1) \in V_{j_1}$ we can use the same $\theta_{j_1}$ to begin the process on $\tilde{\eta}\,|\,{[b_1,a_2]}$, obtaining a curve $\rho_2:[b_1,a_2]\to X$ joining $\mu_{j_1}(\theta_{j_1}, \tilde{\eta}(b_1))$ and $\mu_{j_2}(\theta_{j_2},\tilde{\eta}(a_2))$ for some $\theta_{j_2}\in S^1$ such that  
\[
\mathrm{Length}(\tilde{\eta}\,|\,[b_1,a_2])= \mathrm{Length}(\rho_2\, | \,[b_1,a_2]).
\]
Continuing in this way we obtain a finite number of curves $\rho_i:[b_{i-1}, a_i]\to X$, $i=1,\ldots,m$, (we abuse notation and set $b_0=0$), such that
\[
\mathrm{Length}(\rho_i)= \mathrm{Length}(\tilde{\eta}\, | \,[b_{i-1}, a_i])
\] 
for each $i=1,\ldots,m$. 
As a consequence of Lemma \ref{L:ACTIONS_S1_ON_S1}, the curve $\rho_m$ joins $\mu_{j_{m-1}}(\theta_{j_{m-1}},\tilde{\eta}(b_m))$ and $\mu_0(\theta_0,y)$. Furthermore, since the $\tilde{\eta} \, |\,[a_i,b_i]$ are geodesics fully contained in $V_{j_i}$, they are sent to geodesics under the action of $\theta_{j_i}$. Therefore, concatenating the curves $\rho_i$ and these geodesics, we obtain a piecewise smooth curve $\rho:[0,1]\to X$ joining $\mu_0(\theta_0,x)$ and $\mu_0(\theta_0,y)$ such that 
\[
\mathrm{Length}(\tilde{\eta})= \mathrm{Length}(\rho).
\] 

This implies that $d(x,y)\leq d(\mu_0(\theta_0,x),\mu_0(\theta_0,y))+\varepsilon$.
Then, it suffices to take a sequence $\varepsilon_k\to 0$, consider the associated curves $\tilde{\eta}_k$ and $\rho_k$ to find that 
\[
d(\mu_0(\theta_0,x),\mu_0(\theta_0,y))\geq \lim_{k\to\infty}\mathrm{Length}(\tilde{\eta}_k)=d(x,y).
\] 
Doing the procedure for curves joining $\mu_0(\theta_0,x)$ and $\mu_0(\theta_0,y)$ we get the reverse inequality.  We conclude, then, that the local circle action is isometric on $X_0$.

Finally in the case in which either $x$ or $y$ is a topologically singular point, the result follows by the density of the set  $X_0$ of topologically regular points.
\end{proof}


\section{Topological and equivariant classification}
\label{S:CLASS_GENERAL}
In this section we  prove Theorem~\ref{THM:INVARIANTS}. We first  prove the following technical lemma.
The term \textit{cross-section} (or simply \textit{section}) will be used to denote a map $\mathsf{q}:X^*\to X$ such that $\mathsf{q} \circ \mathsf{p} : X\to X$ is the identity,  as well  as its image $\mathsf{q}(X^*)\subset X$.


\begin{lem}
\label{L:SECTION}
Let $X$ be a closed, connected Alexandrov $3$-space admitting an effective and isometric local circle action without exceptional fibers and $F\neq\emptyset$. Then there exists a cross-section to the fiber map. 
\end{lem}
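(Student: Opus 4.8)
The plan is to build the section block by block over the fiber space $X^*$, using the absence of $E$-fibers and the hypothesis $F\neq\emptyset$ to control the monodromy. First I would recall that, since there are no exceptional fibers, the orbit space $X^*$ is a compact $2$-manifold whose boundary $\partial X^*$ consists of the images of $F$-, $SF$- and $SE$-fibers, and whose interior consists entirely of $R$-fibers; moreover the restriction of the fiber map over the $R$-stratum is an $S^1$-bundle with structure group $O(2)$ classified by the pair $(\varepsilon,k)$ of Theorem~\ref{THM:CLASS_BUNDLES}. The key local input is that over each $F$-block (simple or twisted) and each $SE$-block there is a canonical local section hitting the singular fiber transversally: for a simple $F$-block $D^2\times \sphere^1$ with the action $(z,\rho e^{i\theta},e^{i\psi})\mapsto(z\rho e^{i\theta},e^{i\psi})$ one takes $\{\rho e^{i\theta_0}\}\times\{e^{i\psi}:\psi\}$ — wait, more precisely the arc $\{\,[\rho,0]\,\}\times\{1\}$ gives a section over a radial arc in the disk orbit space — and analogous explicit sections exist in the twisted $F$-, simple $SE$- and twisted $SE$-blocks from their normal forms. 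The strategy is then: choose a section over a collar of each boundary component of $X^*$ coming from an $F$-component, and extend.

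Next I would carry out the extension over the interior. Cut $X^*$ along a system of disjoint arcs and simple closed curves to obtain a disk $\mathsf{D}$; over a disk the $O(2)$-bundle is trivial, so a section exists over $\mathsf{D}$, and in fact we may choose it to agree on the boundary collar of one chosen $F$-component with the canonical section constructed above. The obstruction to gluing back is a collection of elements of $\pi_0(O(2))=\ZZ_2$ attached to the curves of the cutting system — exactly the data that records, in Theorem~\ref{THM:CLASS_BUNDLES}, which generators $s_i$, $a_j$, $b_j$, $v_j$ reverse orientation along the fiber. Here is where $F\neq\emptyset$ is used decisively: the presence of at least one $F$-component forces $\partial X^*\neq\emptyset$, hence $\pi_1(X^*)$ is free, hence every class in $H^1(X^*;\ZZ_2)$ is realized and the monodromy of the $O(2)$-bundle can be killed after an automorphism of the bundle that fixes the chosen collar — equivalently, over a surface with nonempty boundary an $O(2)$-bundle always admits a (possibly orientation-reversing-along-fibers) section, and this section can be homotoped to match the canonical local sections near the $F$-, $SF$- and $SE$-boundary components. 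This is precisely the argument used for $3$-manifolds in \cite[Lemma~4.2]{OR} and \cite[Section~2]{F}, and it goes through verbatim on the $R$-stratum because $\mathsf{p}$ there is an honest $O(2)$-bundle over a $2$-manifold.

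Finally I would glue in the sections over the singular blocks. Over each simple and twisted $F$-block the canonical radial section matches, by construction, the section already chosen on the adjacent $R$-collar (after adjusting by an $S^1$-rotation, which is allowed). Over each simple and twisted $SF$-block — the new non-manifold pieces — I would produce a section using the explicit model: the simple $SF$-block is $R_k\setminus(D^2\times\sphere^1)$ with the suspension circle action on each $\Susp(\RP^2)$ summand, whose orbit space is an annulus, and the suspension itself has an evident meridian section $\{[[r,\theta_0],t]\}$ hitting the singular suspension points; removing an invariant solid torus from the principal part does not destroy this section, and on the boundary it agrees with an $R$-collar section. The twisted $SF$-block is $V\#R_k$ with $V$ a twisted $F$-block, and one assembles the section from the twisted $F$-block section and the $R_k$ section across the equivariant connected-sum $3$-ball, where both actions are standard orthogonal actions on $B^3$ and admit obviously matching sections. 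Concatenating all these pieces over $X^*$ yields the desired global cross-section $\mathsf{q}:X^*\to X$.

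\textbf{Main obstacle.} The only genuine difficulty is the monodromy/gluing step over the $R$-stratum — i.e.\ showing that the $O(2)$-bundle over $X^*$ admits a section compatible with all the prescribed local sections near $\partial X^*$. This is exactly where the hypotheses ``no exceptional fibers'' (so that $X^*$ is a genuine $2$-manifold and $\mathsf{p}$ is a genuine $O(2)$-bundle over the $R$-stratum, with no Seifert-type local obstruction $(\alpha,\beta)$ to worry about) and ``$F\neq\emptyset$'' (so that $\partial X^*\neq\emptyset$ and $\pi_1(X^*)$ is free, making $H^1$ a free $\ZZ_2$-module and every bundle sectionable) enter; once these are in place the construction reduces to the manifold argument of Orlik--Raymond and Fintushel applied on the regular part together with the explicit block models on the singular part.
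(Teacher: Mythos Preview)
Your proposal is correct and follows essentially the same route as the paper: build a section over the $R$-stratum using the Orlik--Raymond/Fintushel argument (the paper simply cites \cite[Lemma~2]{R}), then extend block by block to the $F$-, $SE$-, and $SF$-pieces using their explicit models, with the twisted $SF$-block handled by decomposing it into a twisted-$F$ part and an $R_k$ part --- exactly your $V\#R_k$ description. The only organizational difference is direction: the paper first produces an arbitrary section $\mathsf{q}$ over the $R$-stratum and then extends the resulting boundary curves $\mathsf{q}_j$ \emph{into} each block (which is easy because each block model admits a section with any prescribed boundary), whereas you fix canonical sections on the blocks first and then argue the interior $O(2)$-bundle section can be homotoped to match them; the paper's order avoids the need to match prescribed boundary data on the $R$-side and is slightly cleaner. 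Your obstruction-theory gloss (``$H^1$ a free $\ZZ_2$-module'') is a bit off --- the real point is just that $\partial X^*\neq\emptyset$ makes $X^*$ homotopy equivalent to a $1$-complex, over which any bundle with connected fiber has a section --- but your conclusion is right.
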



\begin{proof}
Let $\mathsf{p}_0:X_0\rightarrow X_0^{*}$ be the restriction of the fiber map to the stratum of $R$-fibers.
Then, by analogous arguments to those of the proof of \cite[Lemma 2]{R}, there exists a cross-section $\mathsf{q}:X_0^{*}\rightarrow X_0$. We let $\mathsf{q}_j$ denote the restriction of $\mathsf{q}$ to the $j$-th boundary component of $X_0^{*}$.
 The $\mathsf{q}_j$ that lie on boundary components corresponding to simple $F$-blocks and simple $SE$-blocks determine extensions of $\mathsf{q}$ to said blocks as in \cite{R}. Extensions of $\mathsf{q}$ to twisted $F$-blocks and twisted $SE$-blocks are constructed analogously. More explicitly, $\mathsf{q}$ determines curves on the Klein bottle boundary components of $X_0$ and we can extend these curves radially to obtain the desired cross-sections. An extension to $\mathsf{q}$ over the simple $SF$-blocks is constructed as in \cite[Theorem 4.1]{NZ}. In order to extend $\mathsf{q}$ to the twisted $SF$-blocks we do the following. We decompose the fiber space of a twisted $SF$-block into two annuli. One of these annuli will have only principal orbits, except at one of its boundary components, where the $SF$-, $F$- and $SE$-points lie. The other annulus is the fiber space of the subset $B_F^0:=K\times [t_0,1]$ of a twisted $F$-block for some $0<t_0<1$. It is clear that $B_F^0$ has a section $\tilde{\mathsf{q}}$ extending $\mathsf{q}$. Therefore, we can extend $\tilde{\mathsf{q}}$ to the whole twisted $SF$-block as in \cite[Theorem 4.1]{NZ}. 
\end{proof}


\begin{rem}
\label{REM:SF_BLOCKS}
Using the cross-section obtained in the previous lemma it is possible to prove that the effective, isometric $S^1$-action on a twisted $SF$-block, equipped with the Riemannian orbifold metric of non-negative curvature is unique up to equivariant homeomorphisms (cf. \cite[Lemma 3]{R},  \cite[Corollary 4.5]{NZ}).
\end{rem}


\begin{proof}[Proof of Theorem \ref{THM:INVARIANTS}] 
We begin by noting that if $X$ does not have any topologically singular points then the result reduces to the classification of effective local circle actions for $3$-manifolds (\cite{F} and \cite{OR2}) in combination with the existence of an invariant Alexandrov metric (see Proposition \ref{L:METRIC}).

Thus, we assume that $s>0$. At the beginning of Section~\ref{S:CONSTRUCTIONS}, we indicated how to obtain a topological space with an effective, local circle action with prescribed invariants. By Proposition \ref{L:METRIC}, this space is an Alexandrov space and the local circle action is isometric. Conversely, given an Alexandrov space with an effective and isometric circle action we can ``read off'' the invariants from the action and from $X$. This proves the first part of the Theorem. 

Now we prove the second part of the theorem. We assume for now that there are no exceptional fibers. Consider the unique $3$-manifold $M$ with the local $S^1$-action determined by $\left\{b; \varepsilon, g, (f+s,k_1+k_3), (t,k_2)\right\}$ as in \cite[Theorem 2]{F}. Note that since $s>0$, $M$ has at least $s$ boundary $F$-components. 
Here, $s-k_3$ of these $F$-components, which we denote by $Q_i$ with $i=1,\ldots, s-k_3$, correspond to simple $F$-blocks.  The remaining $k_3$, denoted by $P_j$, with $j=1,\ldots,k_3$, belong to twisted $F$-blocks.  We now let $R_h$  be the equivariant connected sum of $h$  copies of $\Susp(\mathbb{R}P^2)$. 
 For  each $0\leq i \leq s-k_3$, we successively perform an equivariant connected sum of $M$ and $R_{r_i/2}$ centered at an $F$-fiber in $Q_i$ and similarly, for every $0\leq j \leq k_3$, we successively carry out an equivariant connected sum of $M$ with $R_{q_j/2}$, centered at an $F$-fiber on $P_j$.  We let $X'$ denote the resulting space $M\#R_{r_1/2}\#R_{r_2/2}\# \cdots\#R_{r_{s-k_3}/2}\#R_{q_1/2}\#R_{q_2/2}\# \cdots\#R_{q_{k_3}/2}$ and we observe that it has the fiber space 
\[
(X')^{*}\cong M^{*}\#R^*_{r_1/2}\#R^*_{r_2/2}\# \cdots\#R^*_{r_{s-k_{3}}/2}\#R^*_{q_1/2}\#R^*_{q_2/2}\# \cdots\#R^*_{q_{k_{3}}/2}.
\] 
By Lemma \ref{L:SECTION}, there exists a cross-section $(X')^*\to X'$. Using this cross-section and the methods of \cite[Lemma 3]{R} and  \cite[Corollary 4.5]{NZ} we obtain an equivariant equivalence $X'\cong X$. Thus, we conclude that 
\[
X \cong M\#\underbrace{\Susp(\Real P^2)\# \cdots \# \Susp(\Real P^2)}_{r \text{ summands}}.
\] 

Finally, \cite[ Lemma $6$, Theorems $2a$, $2b$ ]{R} extend to Alexandrov spaces naturally. Therefore, in the case where $X$ has exceptional fibers, the equivalence still holds.    
\end{proof}


\section{Local circle actions and collapse}
\label{S:COLLAPSE}

The collapse of closed, three dimensional Alexandrov spaces was studied by Mitsuishi and Yamaguchi in \cite{MY}. We show in this section that, when the limit space is two-dimensional, some of these spaces admit local circle actions and that the collapse occurs along the orbits of the action.

Let $X$ be a closed, three-dimensional Alexandrov space that collapses with lower curvature bound and upper diameter bound to a two-dimensional space. Mitsuishi and Yamaguchi showed in \cite{MY} that $X$ decomposes as a union of certain blocks. We recall the definition of those that admit circle actions, the so-called \textit{generalized solid tori} and \textit{generalized solid Klein bottles}. 
 
 Consider the families of surfaces in $\mathbb{R}^3$ given by
\[
A(t)=\{ v=(x,y,z) \mid x^2 + y^2-z^2=t^2 \ \ \text{and} \ \  | z|\leq 1  \},
\]
\[
B(t)= \{ v=(x,y,z) \mid x^2 + y^2-z^2=-t^2 \ \ \text{and} \ \  x^2 + y^2 \leq 1  \}.
\]
The $A(t)$ are one-sheeted hyperboloids (and a cone for $t=0$), and $B(t)$ are two-sheeted hyperboloids. We set
\[ 
D(t):= \begin{cases} 
      	  A(t)/\mathbb{Z}_2 & \text{for} \ \ t> 0, \\
          B(t)/\mathbb{Z}_2 & \text{for} \ \ t\leq 0,       
          \end{cases}
\]
where $\mathbb{Z}_2$ acts on $A(t)$ and $B(t)$ by the antipodal map. Therefore, $D(t)$ is homeomorphic to a M\"{o}bius band $\mathrm{Mo}$ if $t>0$ and a disk $D^2$ if $t\leq 0$. Observe that $ \partial D(t) \cong \mathbb{S}^1$. Therefore
\[
\bigcup_{t\in[-1,1]}\partial D(t) \approx \mathbb{S}^1\times [-1,1].
\]

We  have that $\bigcup_{t\geq0}(A(t)\cup B(-t))/\mathbb{Z}_2$ is homeomorphic to the closed cone $K_1(\mathbb{R}P^2)$ and 
 we have a projection map $\pi: K_1(\mathbb{R}P^2)\rightarrow [-1,1]$  given by $\pi(D(t))=t$.

For $N\geq 1$ we regard the circle $\mathbb{S}^1$ as the interval $[0,2N]$ with its endpoints identified. Let $I_j = [j-1,j] \subset [0,2N]$ for $j=1,2,\ldots, 2N$. Now we take a family of spaces $\{B_j\}_{j=1}^{2N}$ such that for each $j$, $B_j$ is isometric to a $K_1(\mathbb{R}P^2)$ constructed similarly to the one above and let $\pi_j:B_j\to I_j$ be a projection similar to the one defined previously. 

These projections coincide at each $\{j\}=I_j\cap I_{j+1}$, that is, $\pi_j^{-1}(j)=\pi_{j+1}^{-1}(j)$ for all $j=1,2,\ldots, 2N$. We glue the $B_j$ with homeomorphisms along the $\pi_j^{-1}(j)$ to obtain a space $Y= \bigcup_{j=1}^{2N}B_j$ which has a ``fibration'' $\pi: Y\rightarrow \mathbb{S}^1$ over $\mathbb{S}^1$ given by $\pi(\pi_j^{-1}(t))=t$. Observe that $Y$ has $2N$ topologically singular points since each $B_j$ has one topologically singular point (corresponding to  the vertex of $D(0)$).

 The restriction $\pi|_{\partial Y}:\partial Y\to \mathbb{S}^1$ is an $\mathbb{S}^1$-bundle over $\mathbb{S}^1$. If $\partial Y$ is a torus, then $Y$ is called a \emph{generalized solid torus of type $N$}. If $\partial Y$ is a Klein bottle, then $Y$ is called a \emph{generalized solid Klein bottle of type $N$}. The spaces $D^2\times S^1$ and $\mathrm{Mo}\times S^1$ are generalized solid tori of type $0$, while $D^2\tilde{\times}S^1$ and $\mathrm{Mo}\tilde{\times}S^1$ are generalized solid Klein bottles of type $0$.


\begin{lem}
\label{P:SOLID_TORUS=SF-BLOCK}
Let $Y$ be a generalized solid torus or a generalized solid Klein bottle of type $N$. Then $Y$ admits an effective, topological local circle action and the following hold:
\begin{itemize}
	\item[1.] If $Y$ is a generalized solid torus and $N>0$, then it is equivalent to a simple $SF$-block.
	\item[2.] If $Y$ is a generalized solid Klein bottle and $N>0$, then it is equivalent to a twisted $SF$-block.
\end{itemize}
\end{lem}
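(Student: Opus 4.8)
The goal is to exhibit a topological local circle action on a generalized solid torus (or Klein bottle) $Y$ of type $N$ and then identify it equivariantly with a simple (resp.\ twisted) $SF$-block. I would organize the argument in three stages: first construct the action, then analyze its fiber/orbit data, and finally invoke a uniqueness statement to conclude equivalence.

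\textbf{Step 1: Constructing the local circle action on $Y$.} Recall that $Y = \bigcup_{j=1}^{2N} B_j$, where each $B_j$ is isometric to a closed cone $K_1(\mathbb{R}P^2)$ with a projection $\pi_j: B_j \to I_j$, and that the cone $K_1(\mathbb{R}P^2)$ carries the standard cone circle action (the suspension of the rotation action on $\mathbb{R}P^2$) described in Section~\ref{S:Orbit_types}. This rotation action on each $D(t) \cong \mathrm{Mo}$ or $D^2$ is just rotation of the boundary circle $\partial D(t) \cong \mathbb{S}^1$, with a single fixed point at the vertex when $t = 0$. I would first observe that this action is well-defined on each slice $D(t)$ and restricts to the standard rotation on $\partial D(t)$, so it patches across the gluing circles $\pi_j^{-1}(j) = \partial D(1)$-type circles (which are tori) to give a global fiber decomposition of $\bigcup_j \partial D(t)$-type pieces. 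More carefully: over the ``annular'' part $\bigcup_t \partial D(t) \approx \mathbb{S}^1 \times [-1,1]$ inside each $B_j$ we have a genuine $S^1$-action by rotation in the $\mathbb{S}^1$ factor; over the interior we have the cone rotation with one fixed point per $B_j$ (at each $D(0)$ vertex, which is precisely a topologically singular point). Each such fixed point has a small invariant neighborhood homeomorphic to $K(\mathbb{R}P^2)$ with the cone action — an $SF$-fiber. The remaining fibers (away from these $2N$ points) are the orbit circles, organized into $R$-, $F$-, or $SE$-type fibers depending on whether the local structure is a solid torus, a twisted $F$-block, or involves the Möbius-band reflection. Thus $Y$ has a decomposition into disjoint simple closed curves each with an invariant tubular neighborhood carrying a circle action, i.e.\ an effective topological local $S^1$-action, with exactly $2N$ $SF$-fibers.

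\textbf{Step 2: Identifying the orbit space and invariants.} Next I would compute the fiber space $Y^*$. The projection $\pi: Y \to \mathbb{S}^1$ composed with (within each $B_j$) the further quotient of each slice $D(t)$ by the circle action gives a map $Y^* \to \mathbb{S}^1$ whose fibers are intervals $[-1,1]$ (the range of the $t$-coordinate in each slice $D(t)$), so $Y^*$ is an annulus $\mathbb{S}^1 \times [-1,1]$. One boundary circle of $Y^*$ is the image of $\partial Y$ and consists entirely of principal $R$-orbits; the other boundary circle is the image of the ``spine'' $\bigcup_j (\text{core of } D(t) \text{ as } t \text{ ranges})$, and along it the isotropy alternates: where $t < 0$ the slice is a disk and the core point has trivial isotropy except... — more precisely, going around, one passes through the $2N$ vertex points (where the slice is $D(0)$, isotropy $S^1$, giving $SF$-fibers) separated by arcs where the isotropy is $\mathbb{Z}_2$ (the Möbius-band segments, $SE$-type) and arcs with trivial isotropy. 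This is exactly the orbit-space description of a simple $SF$-block given in Section~\ref{S:Orbit_types}: ``an annulus... the remaining boundary component is made up of a union of arcs joined by their endpoints, alternating between $S^1$ and $\mathbb{Z}_2$ isotropy groups,'' together with the count of $2N$ topologically singular points. The orientability of $\partial Y$ (torus vs.\ Klein bottle) distinguishes the simple case from the twisted case, exactly as the simple/twisted $SF$-block distinction is governed by whether $\partial V$ is a torus or Klein bottle.

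\textbf{Step 3: Uniqueness / equivariant equivalence.} Having matched the orbit spaces and all fiber invariants (genus $0$, one $R$-boundary, the alternating $SE$/$SF$ structure with $2N$ singular points, and the orientability type of the boundary bundle), I would finish by invoking the uniqueness of the local circle action on an $SF$-block with a given metric. Concretely: by Proposition~\ref{L:METRIC} both $Y$ and the model $SF$-block carry Alexandrov (in fact Riemannian orbifold, of non-negative curvature) metrics making the action isometric, and by Lemma~\ref{L:SECTION} there is a cross-section over the $R$-part of the orbit space; then, exactly as in Remark~\ref{REM:SF_BLOCKS} (which asserts uniqueness up to equivariant homeomorphism for twisted $SF$-blocks, following \cite[Lemma 3]{R} and \cite[Corollary 4.5]{NZ}), the cross-section propagates to a fiber-preserving homeomorphism $Y \to (\text{simple or twisted } SF\text{-block})$. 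The hypothesis $N > 0$ is what guarantees the presence of topologically singular points, so that $Y$ is genuinely an $SF$-block and not merely a (twisted) $F$- or $SE$-block; for $N = 0$ one recovers the manifold blocks $D^2 \times S^1$, $\mathrm{Mo} \times S^1$, etc., as noted before the lemma.

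\textbf{Main obstacle.} I expect the technical heart to be Step~1 — verifying that the local circle actions defined slice-by-slice on the $B_j$ genuinely patch into a \emph{coherent} local circle action on all of $Y$, in particular that the gluing homeomorphisms along $\pi_j^{-1}(j)$ can be taken to respect the rotation actions (one may need to adjust them by fiberwise rotations, using that any homeomorphism of a torus is isotopic to one respecting a given $\mathbb{S}^1$-fibration up to the structure-group ambiguity in $O(2)$), and that the resulting fiber decomposition has the property that \emph{every} fiber has an invariant tubular neighborhood with an honest circle action — including the fibers adjacent to the $SF$-points, where one must see the cone-over-$\mathbb{R}P^2$ local model emerge. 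Everything after that is bookkeeping of invariants plus citation of the established uniqueness results.
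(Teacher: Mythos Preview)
Your approach is correct and follows the same underlying strategy as the paper, but is considerably more elaborate than what the paper actually writes. The paper's proof is very brief: it identifies $\mathbb{R}^3$ with $\mathbb{C}\times\mathbb{R}$, defines the circle action on each slice $D(t)$ explicitly by $(z,[w,s])\mapsto[zw,s]$ via complex multiplication, observes that $\partial D(t)$ is invariant so this extends to an action on each $B_j$, and declares that this gives the local circle action on $Y$. For the identification with $SF$-blocks the paper then says only ``by inspecting the list of non-manifold blocks in Section~\ref{S:Orbit_types}, we obtain the result.'' Your Steps~2 and~3 (computing the orbit-space annulus with its alternating boundary isotropy, and invoking Lemma~\ref{L:SECTION}/Remark~\ref{REM:SF_BLOCKS} for uniqueness) are a careful unpacking of what that inspection entails, and they are correct, but the paper does not spell them out. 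The gluing obstacle you highlight in Step~1 is likewise not addressed explicitly by the paper; it simply takes for granted that the slice-wise rotations assemble into a coherent fiber decomposition of~$Y$.
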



\begin{proof}
We identify $\mathbb{R}^3$ with $\mathbb{C}\times \mathbb{R}$. Let us denote the elements of $D(t)$ by $[w,s]$, where $(w,s)\in A(t)\subset \mathbb{C}\times \mathbb{R}$ if $t>0$ and $(w,s)\in B(t)\subset \mathbb{C}\times \mathbb{R}$ if $t\leq 0$.  Then, for each $t\in [-1,1]$, we have an effective topological circle action
\begin{eqnarray*}
S^1 \times D(t) & \longrightarrow & D(t) \\
(z, [w, s]) & \longmapsto & [zw,s],
\end{eqnarray*} 
where $zw$ is complex multiplication. 
Observe that $\partial D(t)$ is an invariant subset of this action. Therefore, this action extends to an effective topological circle action on $B_j$. We conclude that $Y$ admits effective topological local circle actions, defined by the condition that their restriction to each $B_j$ coincides with the action defined above. Then, by inspecting the list of non-manifold blocks in Section \ref{S:Orbit_types}, we obtain the result. 
\end{proof}

Let $\{X_i\}_{i=1}^{\infty}$ be a sequence of closed Alexandrov $3$-spaces with curvature $\geq -1$ and $\mathrm{diam}\leq D$ converging to an Alexandrov surface $X$ with non-empty boundary. According to \cite[Theorem 1.5]{MY}, for $i$ large enough, $X_i$ is homeomorphic to a union of  generalized solid tori or solid Klein bottles (which are considered as fiberings over the boundary components of $X$) and a \textit{generalized Seifert fiber space} (i.e.\ a Seifert fiber space possibly having singular interval fibers (see \cite[Definition 2.48]{MY} for the precise definition). Small tubular neighborhoods of these singular fibers are homeomorphic to the space $B(pt)=D^2\times S^1/\alpha$ where $\alpha$ is the isometric involution given by $\alpha(e^{i\theta},x)=(e^{-i\theta},-x)$. The singular fiber corresponds to $S^1\times \{0\}/\alpha$. 

The collapse around singular fibers is the one obtained by shrinking the $S^1$-factor, and the collapsed space is homeomorphic to $K_1(S^1)$. The topologically singular points project to the  the vertex of $K_1(S^1)$, which is in the interior of the limit space $X$. Therefore the collapse about singular fibers cannot be the one along the fibers of an effective and isometric circle action, since in this case topologically singular points project to points in the boundary of the limit space $X$. Generalized Seifert fiber spaces without $B(pt)$ fibers are usual Seifert fiber $3$-manifolds, which admit effective local circle actions. Therefore, by Lemma~\ref{P:SOLID_TORUS=SF-BLOCK}, $X_i$ admits a local circle action compatible with the collapse, if and only if its decomposition does not contain $B(pt)$ pieces. 
We obtain in this way the following corollary:


\begin{cor}
\label{C:COLLAPSE}
Let $\{X_i\}_{i=1}^{\infty}$ be a sequence of closed Alexandrov $3$-spaces with curvature $\geq -1$ and $\mathrm{diam}\leq D$ converging to an Alexandrov surface $X$ (possibly with boundary). Further assume that, for $i$ large enough, $X_i$ does not have any singular fibers of type $B(pt)$. Then, for $i$ large enough, $X_i$ is homeomorphic to an Alexandrov space with an effective and isometric local circle action and the collapse $X_i\to X$ occurs along the fibers of the local action. In particular, $X_i$ is homeomorphic to one of the spaces in Theorem~\ref{THM:INVARIANTS}.
\end{cor}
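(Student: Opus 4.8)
The plan is to combine the structural decomposition of collapsing Alexandrov $3$-spaces of Mitsuishi and Yamaguchi with the identification of building blocks carried out in Lemma~\ref{P:SOLID_TORUS=SF-BLOCK}, and then to upgrade the resulting topological local circle action to an isometric one by Proposition~\ref{L:METRIC}. By \cite[Theorem~1.5]{MY}, for $i$ large enough $X_i$ is homeomorphic to a union of generalized solid tori and generalized solid Klein bottles, each fibering over a boundary circle of $X$, glued along their torus and Klein bottle boundaries to a generalized Seifert fiber space sitting over the interior of $X$. The hypothesis that $X_i$ has no singular fibers of type $B(pt)$ forces the generalized Seifert fiber space to be an ordinary Seifert fibered $3$-manifold, which admits an effective local $S^1$-action by \cite{OR2,F}. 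By Lemma~\ref{P:SOLID_TORUS=SF-BLOCK} (together with its $N=0$ instances, where one recovers the manifold blocks $D^2\times S^1$, $\mathrm{Mo}\times S^1$, $D^2\tilde{\times}S^1$, $\mathrm{Mo}\tilde{\times}S^1$), each generalized solid torus or Klein bottle of type $N$ also carries an effective topological local circle action whose orbits are precisely the circles $\partial D(t)$ collapsed in the limit; the pieces with $N>0$ are simple or twisted $SF$-blocks, which is what ultimately produces the $\Susp(\mathbb{R}P^2)$ summands.

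The next step is to glue these local actions into a single effective topological local circle action on $X_i$. The identifications in the Mitsuishi--Yamaguchi decomposition take place along tori or Klein bottles on which each side carries an $S^1$-fibration over a circle --- namely the fiberings over the boundary curves of $X$ --- so the two fibrations on a given gluing piece are isotopic. Composing each gluing homeomorphism with a suitable fiber-preserving self-homeomorphism of the boundary, exactly as in the Seifert fibered constructions recalled in Section~\ref{S:CONSTRUCTIONS}, one may assume every identification is fiber-preserving. This yields on $X_i$ an effective topological local circle action whose fiber decomposition is the decomposition of $X_i$ into the collapsing circles; equivalently, the fiber map $X_i\to X_i^{*}$ is, up to homeomorphism, the collapsing map $X_i\to X$, so the collapse occurs along the orbits of the action.

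Finally, Proposition~\ref{L:METRIC} endows $X_i$ with an Alexandrov metric for which this action is isometric, so $X_i$ is homeomorphic to an Alexandrov space with an effective, isometric local circle action, and part~2 of Theorem~\ref{THM:INVARIANTS} then identifies it with one of the stated connected sums. The main technical point, and the step requiring the most care, is the matching of the fibrations along the gluing tori and Klein bottles: one must check that the reductions of the structure group ($\mathrm{SO}(2)$ versus $\mathrm{O}(2)$) dictated by the orientability of the adjacent blocks, together with the Seifert data of the interior piece, are compatible with a global fiber-preserving gluing, so that the remaining freedom is absorbed into the invariants of Theorem~\ref{THM:INVARIANTS} rather than obstructing the construction.
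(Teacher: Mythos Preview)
Your proposal is correct and follows essentially the same route as the paper: invoke the Mitsuishi--Yamaguchi decomposition \cite[Theorem~1.5]{MY}, observe that the absence of $B(pt)$ fibers makes the interior piece an ordinary Seifert manifold, apply Lemma~\ref{P:SOLID_TORUS=SF-BLOCK} to the boundary pieces, and then use Proposition~\ref{L:METRIC} and Theorem~\ref{THM:INVARIANTS}. The paper treats the corollary as an immediate consequence of the discussion preceding it and does not spell out the gluing step you worry about; in fact this step is automatic, since the Mitsuishi--Yamaguchi decomposition already provides compatible circle fibrations over each boundary component of $X$, so the identifications are fiber-preserving by construction and no further matching of structure groups is needed.
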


\bibliographystyle{amsplain}


\end{document}